\numberwithin{equation}{section}
\theoremstyle{plain}
\newtheorem{theorem}{Theorem}[section]
\newtheorem{corollary}[theorem]{Corollary}
\newtheorem{lemma}[theorem]{Lemma}
\theoremstyle{definition}
\numberwithin{equation}{section}
\newcommand{\HT}{\CJKfamily{hei}}
\newcommand{\R}{\mathbb{R}}
\newcommand{\N}{\mathbb{N}}
\def\be{\begin{equation}}
\def\ee{\end{equation}}
\def\bea{\begin{eqnarray}}
\def\eea{\end{eqnarray}}
\def\bes{\begin{eqnarray*}}
\def\ees{\end{eqnarray*}}
\def\y{\begin{eqnarray*}}
\def\ey{\end{eqnarray*}}
\begin{document}
\title{\HT {Sharp anisotropic singular Trudinger-Moser inequalities in the entire space}}
\author{\small {Kaiwen Guo$^a$, Yanjun Liu$^b$ }\\
\small $^a$Chern Institute of Mathematics, Nankai University, Tianjin  300071,  P. R. China\\
\small $^b$School of Mathematical Sciences, Chongqing Normal University,\\
\small Chongqing 401331, P. R. China}
\date{}
\maketitle
\footnote[0]{~~~~~~Kaiwen Guo}
\footnote[0]{~~~~~~gkw17853142261@163.com }
\footnote[0]{\Letter ~ Yanjun Liu}
\footnote[0]{~~~~~~liuyj@mail.nankai.edu.cn }

\noindent{\small
{\bf Abstract:} In this paper, we investigate sharp singular Trudinger-Moser inequalities involving the anisotropic Dirichlet norm $\left(\int_{\Omega}F^{N}(\nabla u)\;\mathrm{d}x\right)^{\frac{1}{N}}$ in the Sobolev-type space $D^{N,q}(\R^{N})$, $q\geq 1$, here $F:\R^{N}\rightarrow[0,+\infty)$ is a convex function of class $C^{2}(\R^{N}\setminus\{0\})$, which is even and positively homogeneous of degree 1, its polar $F^{0}$ represents a Finsler metric on $\R^{N}$. Combing with the connection between convex symmetrization and Schwarz symmetrization, we will establish anisotropic singular Trudinger-Moser inequalities and discuss their sharpness under several different situations, including the case $\|F(\nabla u)\|_{N}\leq 1$, the case $\|F(\nabla u)\|_{N}^{a}+\|u\|_{q}^{b}\leq 1$, and whether they are associated with exact growth.\\
\noindent{\bf Keywords:}  Anisotropic and singularity; Trudinger-Moser inequalities; Convex symmetrization; Sharp constants\\
\noindent{\bf MSC2010:} 26D10, 35J70, 46E35}

\section{Introduction and main results}\label{section 1}
 In the limiting case, the Sobolev embeddings are replaced by the Trudinger-Moser inequalities. Let $\Omega$ be a bounded domain in $\R^{N}$, $N\geq 2$ and $kp<N$, it is well known that $W_{0}^{k,p}(\Omega)\subset L^{q}(\Omega)$ for all $q$ with $1\leq q\leq{Np}/{(N-kp)}$. However, $W_{0}^{k,\frac{N}{k}}(\Omega)\not\subset L^{\infty}(\Omega)$. For instance, one could verify that the unbounded function $\log\log(1+\frac{1}{\lvert x\rvert})$ is in $W^{1,N}(B_{1}(0))$ for any $N\geq 2$. Trudinger \cite{30}(see also Yudovich \cite{10}) demonstrated, however, that $W_{0}^{1,N}(\Omega)\subset L_{\varphi_{N}}(\Omega)$ in this situation, where $L_{\varphi_{N}}(\Omega)$ is the Orlicz space associated with the Young function $\varphi_{N}(t)=\exp(\alpha\lvert t\rvert^{\frac{N}{N-1}})-1$ for some $\alpha>0$. And these results were improved in \cite{23} by  Moser. Indeed, Moser established that for $0<\alpha\leq\alpha_{N}=N^{\frac{N}{N-1}}\omega_{N}^{\frac{1}{N-1}}$, where $\omega_{N}$ is the volume of the unit $N$-ball, there exists a constant $C=C_{N,\alpha}>0$ satisfying
$$
\int_{\Omega}e^{\alpha\lvert u\rvert^{\frac{N}{N-1}}}\;\mathrm{d}x\leq C_{N,\alpha}\lvert\Omega\rvert
$$
for every $u\in W_{0}^{1,N}(\Omega)$ with $\|\nabla u\|_{N}\leq 1$. Moreover, $\alpha_{N}$ is the optimal constant in the sense that if $\alpha>\alpha_{N}$, the aforementioned inequality can no longer hold with some $C$ independent of $u$. The Trudinger-Moser inequality is the name given to this inequality nowadays.

It should be emphasized that  the above inequality make no sense when the domains have infinite volume,  therefore, it is necessary to look into variations of the Trudinger-Moser inequality in this situation.

Let
$$
\phi_{N}(t)=e^{t}-\sum\limits_{j=0}^{N-2}\frac{t^{j}}{j!},
$$
we provide two different versions of the Trudinger-Moser inequality in $\R^{N}$. Let $\alpha\in(0,\alpha_{N})$; notice that $\alpha=\alpha_{N}$ is not allowed; there are positive constants $C_{N}$ and $C_{N,\alpha}$ such that
\begin{align}
    &A(\alpha):=\sup\limits_{\|\nabla u\|_{N}\leq 1}\frac{1}{\|u\|_{N}^{N}}\int_{\R^{N}}\phi_{N}(\alpha\lvert u\rvert^{\frac{N}{N-1}})\;\mathrm{d}x\leq C_{N,\alpha},\label{A()}\\
    &B:=\sup\limits_{\|\nabla u\|_{N}^{N}+\|u\|_{N}^{N}\leq 1}\int_{\R^{N}}\phi_{N}(\alpha_{N}\lvert u\rvert^{\frac{N}{N-1}})\;\mathrm{d}x\leq C_{N}.\label{B()}
\end{align}
The constant $\alpha_{N}$ is sharp in the sense that $\lim_{\alpha\uparrow\alpha_{N}}A(\alpha)=\infty$, and if the constant $\alpha_{N}$ is replaced by any $\alpha>\alpha_{N}$, $B$ is infinite.

When $\Omega$ contains the origin, Adimurthi and Sandeep \cite{2} generalized Trudinger-Moser inequality to a singular version, namely, let $\alpha>0$ and $0\leq\beta<N$,
$$
\sup\limits_{u\in W_{0}^{1,N}(\Omega),\;\|\nabla u\|_{N}\leq 1}
\int_{\Omega}\frac{e^{\alpha\lvert u\rvert^{\frac{N}{N-1}}}}{\lvert x\rvert^{\beta}}\;\mathrm{d}x<+\infty\Leftrightarrow\frac{\alpha}{\alpha_{N}}+\frac{\beta}{N}\leq 1.
$$

The Trudinger-Moser inequality has been attempted to be extended to infinite volume domains by Cao \cite{5}, Ogawa \cite{25,26} in dimension two and by do  \cite{24}, Adachi and Tanaka \cite{1}, and Kozono, Sato and Wadade \cite{11} in higher dimension. It is interesting to note that the Trudinger-Moser type inequality can only be established for the subcritical case when only the seminorm $\|\nabla u\|_{N}$ is used in the restriction of the function class. Indeed, \eqref{A()} has been proved in \cite{24} and \cite{1} if $\alpha<\alpha_{N}$. Futhermore, their conclusions are actually sharp in the sense that the supremum is infinity when $\alpha\geq\alpha_{N}$. In order to achieve the critical case $\alpha=\alpha_{N}$, Ruf \cite{27} and then Li and Ruf \cite{16} need to use the full form in $W^{1,N}$, namely, $(\|u\|_{N}^{N}+\|\nabla u\|_{N}^{N})^{\frac{1}{N}}$. They also obtain that $\alpha_{N}$ is sharp without accident. It is worth mentioning that all of the studies mentioned above heavily rely on the symmetrization argument. An alternative proof of \eqref{B()} without using symmetrization has been given by Lam and Lu \cite{13}. Different proofs of \eqref{A()} and \eqref{B()} have also been given without using symmetrization in settings such as on the Heisenberg group or high and fractional order Sobolev spaces where symmetrization is not available.(see the work in Lam and Lu \cite{12} and Lam, Lu and Tang \cite{15} which extend the earlier work by Cohn and Lu \cite{6} on finite domains). These works \cite{12, 13, 15} which we just mentioned use the level sets of functions under consideration to develop arguments from local to global inequalities. Subsequntly, Masmoudi and Sani \cite{22} established the Trudinger-Moser inequalities with the exact growth condition in $\R^{N}$, these inequalities play an important role in geometric analysis and partial differential equations.

There are several other related expansions of the Trudinger-Moser inequality. One interesting direction is to generalize the Trudinger-Moser inequality to the case of anisotropic norm. In 2012, Wang and Xia \cite{32} investigated a sharp Trudinger-Moser inequality involving the anisotropic Dirichlet norm $\left(\int_{\Omega}F^{N}(\nabla u)\;\mathrm{d}x\right)^{\frac{1}{N}}$ on $W_{0}^{1,N}(\Omega)$, precisely,
$$
\sup\limits_{u\in W_{0}^{1,N}(\Omega),\;\|F(\nabla u)\|_{N}\leq 1}
\int_{\Omega}e^{\lambda\lvert u\rvert^{\frac{N}{N-1}}}\;\mathrm{d}x<+\infty\Leftrightarrow 0<\lambda\leq\lambda_{N}=N^{\frac{N}{N-1}}\kappa_{N}^{\frac{1}{N-1}}.
$$

After that, Zhou C-L and Zhou C-Q \cite{34} obtained the existence of extremal functions for anisotropic Trudinger-Moser inequality. Similar to Ruf's work \cite{27}, Zhou C-L and Zhou C-Q \cite{35} expanded results to the entire space by substituting the full anisotropic Sobolev norm $\left(\int_{\Omega}(F^{N}(\nabla u)+\lvert u\rvert^{N})\;\mathrm{d}x\right)^{\frac{1}{N}}$ for the anisotropic Dirichlet norm $\left(\int_{\Omega}F^{N}(\nabla u)\;\mathrm{d}x\right)^{\frac{1}{N}}$. Futhermore, in \cite{17}, Liu established anisotropic Trudinger-Moser inequalities associated with the exact growth in $\R^{N}$,  moreover, the author also calculated the values of the supremums of anisotropic Trudinger--Moser inequalities, and  existence and nonexistence of  the extremal functions  are also obtained in certain cases.

The main purpose of this paper is to research sharp anisotropic singular Trudinger-Moser inequalities in the Sobolev-type spaces $D^{N,q}(\R^{N}),\;q\geq 1$, namely the completion of $C_{0}^{\infty}(\R^{N})$ under norm $\|\nabla u\|_{N}+\|u\|_{q}$. We note that when $q=N$, $D^{N,q}(\R^{N})=W^{1,N}(\R^{N})$.

In this paper, we will always assume that
$$
N\geq 2,\;\;0\leq\beta<N,\;\;0\leq\lambda<\lambda_{N},\;\;q\geq 1
$$
and consider the following function:
$$
\Phi_{N,q,\beta}(t)=\left\{\begin{array}{cc}
\sum\limits_{j\in\N,\;j>\frac{q(N-1)}{N}(1-\frac{\beta}{N})}\frac{t^{j}}{j!} & {\mathrm{if}}\;\beta>0, \\
\sum\limits_{j\in\N,\;j\geq\frac{q(N-1)}{N}}\frac{t^{j}}{j!} & {\mathrm{if}}\;\beta=0.
\end{array}\right.
$$

Our primary objective in this paper is to demonstrate that:
\begin{theorem}\label{1.1}
Let $p>q(1-\frac{\beta}{N})(p\geq q\;if\;\beta=0)$. Then there exists a constant $C=C(N,p,q,\lambda,\beta)>0$ such that for all $u\in D^{N,q}(\R^{N})$, $\|F(\nabla u)\|_{N}\leq 1$, there holds
$$
\int_{\R^{N}}\frac{\exp(\lambda(1-\frac{\beta}{N})\lvert u\rvert^{\frac{N}{N-1}})\lvert u\rvert^{p}}{F^{0}(x)^{\beta}}\;\mathrm{d}x\leq C\|u\|_{q}^{q(1-\frac{\beta}{N})}.
$$
Moreover, the constant $\lambda_{N}$ is sharp in the sense that if $\lambda\geq\lambda_{N}$, the constant $C$ cannot be uniform in function u.
\end{theorem}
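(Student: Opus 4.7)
The plan is to reduce the estimate, via a dilation and the convex symmetrization machinery, to a one-dimensional weighted Moser-type inequality, and then to control it by an Adachi--Tanaka style splitting that exploits the subcriticality $\lambda<\lambda_{N}$. First I observe that the dilation $u_{\mu}(x):=u(\mu x)$ leaves $\|F(\nabla u)\|_{N}$ invariant, while $\|u_{\mu}\|_{q}=\mu^{-N/q}\|u\|_{q}$, and by the homogeneity $F^{0}(x/\mu)=F^{0}(x)/\mu$ the left-hand side of the desired inequality rescales by the factor $\mu^{\beta-N}$. Choosing $\mu=\|u\|_{q}^{q/N}$, Theorem~\ref{1.1} becomes equivalent to the uniform estimate
\[
\sup_{\|F(\nabla v)\|_{N}\leq 1,\ \|v\|_{q}=1}\int_{\R^{N}}\frac{\exp(\lambda(1-\tfrac{\beta}{N})|v|^{N/(N-1)})|v|^{p}}{F^{0}(x)^{\beta}}\,\mathrm{d}x<\infty,
\]
which is the natural setting for symmetrization.

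Next I apply the convex symmetrization $v\mapsto v^{\bigstar}$ with respect to the Finsler metric $F^{0}$. Because the weight $F^{0}(x)^{-\beta}$ is a decreasing function of $F^{0}(x)$, a Hardy--Littlewood-type rearrangement only increases the left-hand side, while $\|F(\nabla v^{\bigstar})\|_{N}\leq\|F(\nabla v)\|_{N}$ and $\|v^{\bigstar}\|_{q}=\|v\|_{q}$ are preserved; hence I may assume $v(x)=\bar v(F^{0}(x))$ with $\bar v$ nonincreasing. Using the Moser substitution $F^{0}(x)=\kappa_{N}^{-1/N}e^{-t/N}$, the identity $F(\nabla F^{0})\equiv 1$, and $|\{F^{0}\leq r\}|=\kappa_{N}r^{N}$, a routine calculation analogous to the isotropic case converts the problem into the one-dimensional weighted bound
\[
\int_{\R} e^{\alpha|\psi(t)|^{N/(N-1)}}\,|\psi(t)|^{p}\,e^{-(1-\beta/N)t}\,\mathrm{d}t\ \leq\ C,
\]
subject to $N^{N}\kappa_{N}\int_{\R}|\psi'(t)|^{N}\,\mathrm{d}t\leq 1$ and $\int_{\R}|\psi(t)|^{q}e^{-t}\,\mathrm{d}t=\kappa_{N}^{-1}$, where $\alpha=\lambda(1-\beta/N)$ still satisfies $\alpha/\lambda_{N}<1-\beta/N$ because $\lambda<\lambda_{N}$.

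For this one-dimensional bound I split at the unique $T$ with $|\psi(T)|=1$. On the region $\{|\psi|\leq 1\}$ the exponential factor is bounded by $e^{\alpha}$, and the decomposition $|\psi|^{p}e^{-(1-\beta/N)t}=(|\psi|^{q}e^{-t})^{1-\beta/N}\cdot|\psi|^{\,p-q(1-\beta/N)}$ together with $|\psi|\leq 1$ (and $p>q(1-\beta/N)$) permits an estimate by the $L^{q}$ constraint via a H\"older-type insertion. On the complementary region, the fundamental theorem of calculus gives $|\psi(t)-\psi(T)|\leq|t-T|^{(N-1)/N}\|\psi'\|_{N}$, which together with $(a+b)^{N/(N-1)}\leq(1+\varepsilon)a^{N/(N-1)}+C_{\varepsilon}b^{N/(N-1)}$ yields an Adachi--Tanaka type estimate
\[
\alpha|\psi(t)|^{N/(N-1)}\leq (1+\varepsilon)\,\frac{\lambda}{\lambda_{N}}(1-\tfrac{\beta}{N})|t-T|+C_{\varepsilon},
\]
where the extra factor $(1+\varepsilon)$ is affordable precisely because $\lambda<\lambda_{N}$; after multiplying by $e^{-(1-\beta/N)t}$ the net exponent in $t$ stays strictly negative, so the integral is finite uniformly in $\psi$. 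The main obstacle is the bookkeeping that ties the polynomial prefactor $|\psi|^{p}$ to the borderline exponent $q(1-\beta/N)$: this is exactly the role of the hypothesis $p>q(1-\beta/N)$ (resp.\ $p\geq q$ when $\beta=0$), which prevents a logarithmic divergence when $|\psi|$ approaches the threshold $1$.

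Finally, for sharpness I test against the anisotropic Moser sequence
\[
M_{n}(x) = (N\kappa_{N})^{-1/N}\begin{cases}(\log n)^{(N-1)/N}, & F^{0}(x)\leq \tfrac{1}{n},\\ (\log n)^{-1/N}\log\tfrac{1}{F^{0}(x)}, & \tfrac{1}{n}\leq F^{0}(x)\leq 1,\\ 0, & F^{0}(x)>1,\end{cases}
\]
which satisfies $\|F(\nabla M_{n})\|_{N}=1$. A direct computation yields $\|M_{n}\|_{q}^{q(1-\beta/N)}\sim c\,(\log n)^{-q(1-\beta/N)/N}$, while the identity $\lambda_{N}(1-\beta/N)(N\kappa_{N})^{-1/(N-1)}=N-\beta$ shows that, at $\lambda=\lambda_{N}$, the integral over $\{F^{0}\leq 1/n\}$ is already bounded below by a constant multiple of $(\log n)^{p(N-1)/N}$ (and strictly larger for $\lambda>\lambda_{N}$). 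Consequently the quotient of the left-hand side by $\|M_{n}\|_{q}^{q(1-\beta/N)}$ diverges as $n\to\infty$, and no uniform constant $C$ can hold when $\lambda\geq\lambda_{N}$.
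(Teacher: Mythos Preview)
Your overall architecture---normalize by dilation, pass to the convex rearrangement, rewrite in the Moser variable $t$, and split at the threshold $|\psi|=1$---matches the paper's strategy. The paper first converts the anisotropic problem to the isotropic one via Lemma~\ref{3.1} and then cites the bounded-domain singular Moser inequality of Adimurthi--Sandeep on $\{u>1\}$ and a weighted radial embedding (Lemma~\ref{2.5}) on $\{u\le 1\}$; you instead work directly in one dimension, which is more self-contained. Your treatment of $\{|\psi|>1\}$ is essentially correct, though you should say explicitly why the bound is uniform in $T$: after your estimates the contribution is $\lesssim e^{-(1-\beta/N)T}$, and $e^{-T}=\int_{T}^{\infty}e^{-t}\,dt\le\int_{T}^{\infty}|\psi|^{q}e^{-t}\,dt\le\kappa_{N}^{-1}$ from the $L^{q}$ normalization. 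The sharpness part with the anisotropic Moser sequence is fine and agrees with the paper's computation.

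There is, however, a genuine gap on $\{|\psi|\le 1\}$. Your factorization
\[
|\psi|^{p}e^{-(1-\beta/N)t}=\bigl(|\psi|^{q}e^{-t}\bigr)^{1-\beta/N}\,|\psi|^{\,p-q(1-\beta/N)}
\]
does not by itself yield a bound: a ``H\"older-type insertion'' would produce the dual factor $\bigl(\int_{-\infty}^{T}1\,dt\bigr)^{\beta/N}=\infty$, so $\int_{-\infty}^{T}(|\psi|^{q}e^{-t})^{1-\beta/N}\,dt$ is not controlled by $\int|\psi|^{q}e^{-t}\,dt$. Your diagnosis that $p>q(1-\beta/N)$ ``prevents a logarithmic divergence when $|\psi|$ approaches $1$'' is also misplaced; the real danger is at $t\to-\infty$ (spatial infinity). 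The missing ingredient is the pointwise radial decay coming from monotonicity plus the $L^{q}$ normalization: since $\kappa_{N}r^{N}|\bar v(r)|^{q}\le\int_{W_{r}}|\bar v|^{q}\le 1$, one has $|\psi(t)|\lesssim e^{t/q}$, whence $|\psi|^{p}e^{-(1-\beta/N)t}\lesssim e^{(p/q-1+\beta/N)t}$ is integrable on $(-\infty,0]$ precisely when $p>q(1-\beta/N)$; the remaining piece $[0,T]$ is handled by $|\psi|\le 1$ and the exponential weight. This is exactly what the paper's citation of Lemma~\ref{2.5} encodes (combined with the same scaling you used at the outset), and it is where the hypothesis on $p$ actually enters.
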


A consequence of Theorem \ref{1.1} is the following anisotropic subcritical Trudinger-Moser inequality:
\begin{corollary}\label{1.2}
The constant
$$
{\mathrm{ATMSC}}(N,q,\lambda,\beta)=
\sup\limits_{\|F(\nabla u)\|_{N}\leq 1}
\frac{1}{\|u\|_{q}^{q(1-\frac{\beta}{N})}}
\int_{\R^{N}}\frac{\Phi_{N,q,\beta}(\lambda(1-\frac{\beta}{N})\lvert u\rvert^{\frac{N}{N-1}})}{F^{0}(x)^{\beta}}\;\mathrm{d}x<\infty
$$
and the constant $\lambda_{N}$ is sharp.
\end{corollary}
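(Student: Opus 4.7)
The plan is to reduce Corollary \ref{1.2} to Theorem \ref{1.1} by dominating the truncated exponential series $\Phi_{N,q,\beta}$ pointwise by a power of $|u|$ times the full exponential, so that Theorem \ref{1.1} directly yields the desired bound.

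Let $j_0$ denote the smallest integer appearing in the series defining $\Phi_{N,q,\beta}$; by construction $j_0 > \frac{q(N-1)}{N}(1-\frac{\beta}{N})$ when $\beta > 0$ and $j_0 \geq \frac{q(N-1)}{N}$ when $\beta = 0$. Setting $p := \frac{j_0 N}{N-1}$, one immediately checks that $p > q(1-\frac{\beta}{N})$ (respectively $p \geq q$ when $\beta = 0$), so the hypothesis of Theorem \ref{1.1} is met for this choice of $p$. Shifting the summation index $k \mapsto k - j_0$ and using $k! \geq (k-j_0)!$ for $k \geq j_0$ produces the pointwise bound
\bes
\Phi_{N,q,\beta}\!\left(\lambda\bigl(1-\tfrac{\beta}{N}\bigr)|u|^{\frac{N}{N-1}}\right)
&=& \sum_{k \geq j_0}\frac{\bigl(\lambda(1-\tfrac{\beta}{N})\bigr)^{k}|u|^{\frac{kN}{N-1}}}{k!} \\
&\leq& \bigl(\lambda(1-\tfrac{\beta}{N})\bigr)^{j_0}|u|^{p}\exp\!\left(\lambda(1-\tfrac{\beta}{N})|u|^{\frac{N}{N-1}}\right).
\ees
Dividing by $F^{0}(x)^{\beta}$, integrating over $\R^{N}$, and invoking Theorem \ref{1.1} with $p = \frac{j_0 N}{N-1}$ yields a uniform bound, giving $\mathrm{ATMSC}(N,q,\lambda,\beta) < \infty$.

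For the sharpness of $\lambda_N$, I would test the supremum against an anisotropic Moser concentrating sequence $\{u_n\}$ constructed from the Wulff shape associated with the Finsler metric $F^{0}$, normalized so that $\|F(\nabla u_n)\|_{N} = 1$. In the inner Wulff region, $|u_n|$ is of order $(\log n)^{\frac{N-1}{N}}$, so that $\Phi_{N,q,\beta}(\lambda(1-\tfrac{\beta}{N})|u_n|^{\frac{N}{N-1}})$ agrees with $\exp(\lambda(1-\tfrac{\beta}{N})|u_n|^{\frac{N}{N-1}})$ up to a subtracted polynomial that is negligible compared to the exponential blow-up. A direct computation of the singular weighted integral, divided by $\|u_n\|_{q}^{q(1-\beta/N)}$, then diverges whenever $\lambda \geq \lambda_{N}$. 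The main obstacle will be the careful bookkeeping of the anisotropic volume constant $\kappa_{N}$ together with the singular factor $F^{0}(x)^{-\beta}$ in the concentration profile; this, however, should closely parallel the sharpness argument already built into Theorem \ref{1.1}, which we are permitted to assume.
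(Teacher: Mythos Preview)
Your proposal is correct and matches the paper's intended derivation: the paper states Corollary \ref{1.2} merely as ``a consequence of Theorem \ref{1.1}'' with no separate proof, and the pointwise domination $\Phi_{N,q,\beta}(t)\le t^{j_0}e^{t}$ (with $p=\tfrac{j_0N}{N-1}$ satisfying the hypothesis of Theorem \ref{1.1}) is exactly the mechanism by which the corollary follows. For sharpness, the very same Moser-type sequence $\{u_n\}$ built in the proof of Theorem \ref{1.1} works; on the inner Wulff region $\lambda_N(1-\tfrac{\beta}{N})|u_n|^{\frac{N}{N-1}}=n$, so $\Phi_{N,q,\beta}(n)\sim e^{n}$ and the weighted integral is bounded below by a positive constant while $\|u_n\|_q^{q(1-\beta/N)}\sim n^{-\frac{q}{N}(1-\frac{\beta}{N})}\to 0$, so no additional bookkeeping beyond what is already in the paper is needed.
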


The special case $q=N$ of Corollary \ref{1.2} is a result in \cite{28}, and we can easily draw more conclusions which are similar to those in \cite{28}.
\begin{theorem}\label{1.3}
Let $a>0$ and $b>0$. Then
\begin{align}
    {\mathrm{ATMC}}_{a,b}(N,q,\beta)=\sup\limits_{\|F(\nabla u)\|_{N}^{a}+\|u\|_{q}^{b}\leq 1}\int_{\R^{N}}\frac{\Phi_{N,q,\beta}(\lambda_{N}(1-\frac{\beta}{N})\lvert u\rvert^{\frac{N}{N-1}})}{F^{0}(x)^{\beta}}\;\mathrm{d}x<\infty\Leftrightarrow b\leq N.\notag
\end{align}
The constant $\lambda_{N}$ is sharp in the sense that if $\lambda>\lambda_{N}$, the above supremum will be infinite. Also, there are positive constants $c(N,q,\beta)$ and $C(N,q,\beta)$ such that when $\lambda\lesssim\lambda_{N}$,
$$
\frac{c(N,q,\beta)}{\left(1-(\frac{\lambda}{\lambda_{N}})^{N-1}\right)^{\frac{q(1-\frac{\beta}{N})}{N}}}\leq {\mathrm{ATMSC}}(N,q,\lambda,\beta)\leq \frac{C(N,q,\beta)}{\left(1-(\frac{\lambda}{\lambda_{N}})^{N-1}\right)^{\frac{q(1-\frac{\beta}{N})}{N}}}.
$$
Moreover, we have the following identity:
$$
{\mathrm{ATMC}}_{a,b}(N,q,\beta)=\sup\limits_{\lambda\in(0,\lambda_{N})}
\left(\frac{1-(\frac{\lambda}{\lambda_{N}})^{a\frac{N-1}{N}}}{(\frac{\lambda}{\lambda_{N}})^{b\frac{N-1}{N}}}\right)^{\frac{q}{b}(1-\frac{\beta}{N})}{\mathrm{ATMSC}}(N,q,\lambda,\beta).
$$
\end{theorem}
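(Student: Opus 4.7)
The plan is to reduce everything to the subcritical inequality of Corollary~\ref{1.2} via a one-parameter rescaling, and then read off the equivalence $b\leq N$, the sharpness of $\lambda_{N}$, and the two-sided bound on $\mathrm{ATMSC}$ from an asymptotic analysis as $\lambda\uparrow\lambda_{N}$. The structural heart of the theorem is the identity relating $\mathrm{ATMC}_{a,b}(N,q,\beta)$ to a one-parameter supremum in $\lambda$ of $\mathrm{ATMSC}(N,q,\lambda,\beta)$; once it is proved, the remaining claims all follow.

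For the ``$\leq$'' direction of the identity, I fix $u$ with $\|F(\nabla u)\|_{N}^{a}+\|u\|_{q}^{b}\leq 1$, set $\mu:=\|F(\nabla u)\|_{N}\in(0,1)$ and $v:=u/\mu$, so that $\|F(\nabla v)\|_{N}=1$ and $|u|^{N/(N-1)}=\mu^{N/(N-1)}|v|^{N/(N-1)}$. Applying Corollary~\ref{1.2} to $v$ at level $\lambda:=\lambda_{N}\mu^{N/(N-1)}\in(0,\lambda_{N})$, using $\|v\|_{q}=\|u\|_{q}/\mu$ together with $\|u\|_{q}^{b}\leq 1-\mu^{a}$, and recording $\mu=(\lambda/\lambda_{N})^{(N-1)/N}$ bounds the left-hand integral by the right-hand expression at this $\lambda$. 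For ``$\geq$'', given any $v$ with $\|F(\nabla v)\|_{N}=1$ and any $\lambda\in(0,\lambda_{N})$, I would use the rescaled test function $u(x):=\mu\,v(\gamma x)$ with $\mu:=(\lambda/\lambda_{N})^{(N-1)/N}$ and $\gamma>0$ chosen so that $\|F(\nabla u)\|_{N}^{a}+\|u\|_{q}^{b}=1$. The invariance of $\|F(\nabla\cdot)\|_{N}$ under this dilation and the $1$-homogeneity of $F^{0}$ change variables in the integral and reproduce exactly the claimed prefactor.

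With the identity in hand, the equivalence $b\leq N$ becomes a pure asymptotic computation: setting $\epsilon:=1-\lambda/\lambda_{N}$, the prefactor behaves like $\epsilon^{q(1-\beta/N)/b}$ while $\mathrm{ATMSC}\sim\epsilon^{-q(1-\beta/N)/N}$ by the two-sided bound, so the product is of order $\epsilon^{q(1-\beta/N)(1/b-1/N)}$, which stays bounded as $\epsilon\to 0^{+}$ precisely when $b\leq N$. If $b>N$, the lower bound on $\mathrm{ATMSC}$ forces divergence, so $\mathrm{ATMC}_{a,b}=\infty$. The claim that any $\lambda>\lambda_{N}$ kills the supremum is obtained by testing against a classical anisotropic Moser sequence concentrated at the origin in a Wulff ball, as in Wang--Xia~\cite{32} and Zhou--Zhou~\cite{35}.

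The main obstacle is proving the two-sided bound on $\mathrm{ATMSC}(N,q,\lambda,\beta)$ with the sharp exponent $q(1-\beta/N)/N$. For the upper bound, I would revisit the proof of Theorem~\ref{1.1} and, via convex symmetrization, reduce the estimate to a radial one-dimensional integral against the singular weight $F^{0}(x)^{-\beta}$, splitting it into a core (a Wulff ball) and an exterior tail and tracking how the implicit constant degenerates as $\lambda\to\lambda_{N}$; the core contribution is controlled by a Hardy-type estimate that pins down the blow-up rate. For the matching lower bound, I would insert a carefully concentrated anisotropic Moser sequence $u_{k}$, normalized so that $\|F(\nabla u_{k})\|_{N}=1$, and optimize the concentration scale to realize the rate $(1-(\lambda/\lambda_{N})^{N-1})^{-q(1-\beta/N)/N}$. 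Matching these two rates closes the loop and completes the proof.
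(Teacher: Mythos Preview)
The paper itself does not supply a proof of Theorem~\ref{1.3}; it simply asserts that the conclusions can be drawn ``similarly to those in \cite{28}''. Your overall architecture---establish the identity via a one-parameter rescaling and then read off the two-sided asymptotics of $\mathrm{ATMSC}$ and the equivalence $b\leq N$---is precisely the standard route in that literature. Your derivation of the identity (both directions) and the lower bound on $\mathrm{ATMSC}$ via the anisotropic Moser sequence $u_n$ from the proof of Theorem~\ref{1.1}, with $n\sim(1-\lambda/\lambda_N)^{-1}$, are correct.

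There is, however, a genuine gap in your plan for the \emph{upper} bound on $\mathrm{ATMSC}(N,q,\lambda,\beta)$. You propose to ``revisit the proof of Theorem~\ref{1.1}\ldots tracking how the implicit constant degenerates as $\lambda\to\lambda_N$'', but in that proof the core piece $I_1$ is controlled through the elementary inequality $(1+t^{(N-1)/N})^{N/(N-1)}\leq(1+\epsilon)t+C_\epsilon$ with $\epsilon=\lambda_N/\lambda-1$, and the resulting bound carries a factor $\exp\bigl(c\,C_\epsilon\bigr)$. An easy optimization shows $C_\epsilon\sim\epsilon^{-1/(N-1)}$, so this constant blows up like $\exp\bigl(c(1-\lambda/\lambda_N)^{-1/(N-1)}\bigr)$---far worse than the polynomial rate $\bigl(1-(\lambda/\lambda_N)^{N-1}\bigr)^{-q(1-\frac{\beta}{N})/N}$ the theorem asserts. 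The unspecified ``Hardy-type estimate'' does not point to a mechanism that repairs this.

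Two standard remedies are available. The first inverts your logic: prove directly that $\mathrm{ATMC}_{a,N}(N,q,\beta)<\infty$ (for instance by convex symmetrization reducing to the isotropic Ruf-type inequality in $D^{N,q}(\R^N)$), and then the ``$\geq$'' half of your identity, specialized to $b=N$, immediately delivers the sharp upper bound on $\mathrm{ATMSC}$. The second uses the exact-growth inequality of Theorem~\ref{1.4} (with $p=q$) together with the elementary pointwise estimate
\[
\Phi_{N,q,\beta}\bigl(\lambda(1-\tfrac{\beta}{N})s\bigr)\bigl(1+s^{\frac{q}{N}(1-\frac{\beta}{N})}\bigr)\;\leq\;\frac{C(N,q,\beta)}{\bigl(1-\lambda/\lambda_N\bigr)^{\frac{q}{N}(1-\frac{\beta}{N})}}\,\Phi_{N,q,\beta}\bigl(\lambda_N(1-\tfrac{\beta}{N})s\bigr),\qquad s\geq 0,
\]
which, applied with $s=|u|^{N/(N-1)}$, yields the upper bound directly. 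With either correction in place, the rest of your argument closes as written.
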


Then, we investigate the anisotropic singular Trudinger-Moser inequality with exact growth, which is our main objective.
\begin{theorem}\label{1.4}
Let $p\geq q\geq 1$, $d>0$. Then there exists a constant $C=C(N,p,q,\beta,d)>0$ such that for all $u\in D^{N,q}(\R^{N})$, $\|F(\nabla u)\|_{N}\leq 1$, there holds
$$
\int_{\R^{N}}\frac{\Phi_{N,q,\beta}(\lambda_{N}(1-\frac{\beta}{N})u^{\frac{N}{N-1}})}{(1+d\lvert u\rvert^{\frac{p}{N-1}(1-\frac{\beta}{N})})F^{0}(x)^{\beta}}\;\mathrm{d}x\leq C\|u\|_{q}^{q(1-\frac{\beta}{N})}.
$$
Moreover, the inequality fails if $p<q$ or the constant $\lambda_{N}$ is replaced by any $\lambda>\lambda_{N}$.
\end{theorem}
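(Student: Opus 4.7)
The plan is to follow the Masmoudi--Sani exact-growth scheme, adapted to the anisotropic singular setting via convex symmetrization. First I would invoke the convex symmetrization $u^\star$ of $|u|$ with respect to $F^0$. By the anisotropic P\'olya--Szeg\H{o} inequality one has $\|F(\nabla u^\star)\|_N \leq \|F(\nabla u)\|_N$ and $\|u^\star\|_q = \|u\|_q$, while the Hardy--Littlewood rearrangement inequality, together with the monotonicity in $|u|$ of the integrand and the radial non-increase of $F^0(x)^{-\beta}$ in $F^0$, implies that the left-hand side can only grow when $u$ is replaced by $u^\star$. Hence I may assume $u(x) = \psi(F^0(x))$ with $\psi$ nonincreasing, and passing to the Finsler polar coordinate $r = F^0(x)$ reduces the Dirichlet constraint to $N\kappa_N \int_0^\infty (-\psi'(r))^N r^{N-1}\,dr \leq 1$ and the target integral to a weighted one-dimensional integral with Jacobian $r^{N-1-\beta}$.

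\textbf{Main estimate.} Next I would choose a cutoff $R_u > 0$ by the balancing condition $\int_{\{F^0(x) \geq R_u\}} |u|^q\,dx = \tfrac{1}{2}\|u\|_q^q$, and split the integral into outer and inner pieces. On the outer region, a radial Morrey-type estimate yields the pointwise bound $\psi(r) \lesssim \|u\|_q\, r^{-(N-\beta)/q}$ for $r \geq R_u$, which, substituted into the integrand and integrated against the weight $r^{-\beta}$, already delivers a bound of the form $C\|u\|_q^{q(1-\beta/N)}$ by brute force. On the inner region I would apply the Moser-type logarithmic change of variable $w(t) = \lambda_N^{(N-1)/N}\psi(R_u e^{-t/N})$, which turns the Dirichlet constraint into $\int_0^\infty |w'|^N\,dt \leq \theta^N$ for some $\theta \in (0,1]$ and converts the singular weight $F^0(x)^{-\beta}$ into an exponential factor $e^{-(1-\beta/N)t}$. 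The Hardy-type estimate $w(t)^{N/(N-1)} \leq (1-\beta/N)^{-1}t + w(0)^{N/(N-1)} + \text{l.o.t.}$ together with H\"older's inequality shows that $\exp(\lambda_N(1-\beta/N) w^{N/(N-1)})$ is pointwise at most $e^{(1-\beta/N)t}$ times a polynomial in $w$; the exact-growth denominator $1+d w^{p(1-\beta/N)/(N-1)}$ then defeats this polynomial blow-up precisely when $p \geq q$, making the resulting $t$-integral convergent and yielding the claim.

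\textbf{Sharpness and main obstacle.} For the sharpness assertions, I would test the inequality on anisotropic Moser sequences $M_n$ concentrated on the Wulff ball $\{F^0 \leq 1/n\}$, as in Wang--Xia \cite{32} and the singular analogue of Adimurthi--Sandeep \cite{2}, and verify by direct computation that the left-hand side over $\{F^0 \leq 1/n\}$ grows strictly faster than $\|M_n\|_q^{q(1-\beta/N)}$ whenever $\lambda > \lambda_N$ or $p < q$. The main obstacle lies in the delicate bookkeeping of the inner-region estimate: the singular weight $F^0(x)^{-\beta}$ simultaneously modifies the Jacobian exponent (from $e^t$ to $e^{(1-\beta/N)t}$) and the critical constant (from $\lambda_N$ to $\lambda_N(1-\beta/N)$), and checking that the exact-growth denominator with exponent $p(1-\beta/N)/(N-1)$ compensates both so that the sharp bound is $\|u\|_q^{q(1-\beta/N)}$ (and not merely $\|u\|_q^q$) requires careful tracking of constants through the change of variables; the condition $p \geq q$ emerges as the precise threshold at which this compensation succeeds on level sets of mass comparable to $\|u\|_q^q$.
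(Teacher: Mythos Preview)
Your proposal has a genuine gap in the inner--region analysis. With the change of variable $w(t)=\lambda_N^{(N-1)/N}\psi(R_u e^{-t/N})$ and only the constraint $\int_0^\infty|w'|^N\,dt\le\theta^N\le 1$, H\"older gives $w(t)\le w(0)+\theta t^{(N-1)/N}$, hence for large $t$
\[
w(t)^{\frac{N}{N-1}}\ \le\ \theta^{\frac{N}{N-1}}t+\tfrac{N}{N-1}\,w(0)\,t^{1/N}+O(1),
\]
not the claimed $(1-\tfrac{\beta}{N})^{-1}t+w(0)^{N/(N-1)}+\text{l.o.t.}$ When $\theta=1$ (which your mass--balancing cutoff does not exclude), multiplying by $(1-\tfrac{\beta}{N})$ and exponentiating yields $e^{(1-\beta/N)t}\cdot e^{Cw(0)t^{1/N}}$; the second factor is super--polynomial in $t$, so the denominator $1+d\,w^{p(1-\beta/N)/(N-1)}\sim t^{p(1-\beta/N)/N}$ cannot defeat it and the $t$--integral diverges whenever $w(0)>0$. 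The outer--region step is also not sound as written: the radial Morrey bound is $\psi(r)\lesssim\|u\|_q\,r^{-N/q}$ (no $\beta$), and plugging a pointwise bound into the critical exponential does not produce a uniform estimate, since $\psi(R_u)$ is not bounded in terms of $\|u\|_q$ alone.

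The paper repairs exactly this by choosing the cutoff via a \emph{Dirichlet--energy} split rather than a mass split: pick $R_1$ with $\int_{B_{R_1}}F^N(\nabla u)\le 1-\varepsilon_0$ and $\int_{\R^N\setminus B_{R_1}}F^N(\nabla u)\le\varepsilon_0$. Then on $B_{R_1}$ one has $\theta^{N}\le 1-\varepsilon_0<1$, so the $t$--coefficient becomes strictly subcritical and the singular Trudinger--Moser inequality on $B_{R_1}$ applies, leaving a factor $\exp\big(C_\epsilon\,u(R_1)^{N/(N-1)}\big)/u(R_1)^{p(1-\beta/N)/(N-1)}\cdot R_1^{N-\beta}$. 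The key missing ingredient in your sketch is the discrete Masmoudi--Sani lemma (Lemma~\ref{3.2}): it converts the remaining $\varepsilon_0$ of Dirichlet energy outside $B_{R_1}$ into the bound
\[
\frac{\exp(\lambda_N\varepsilon_0^{\frac{1}{1-N}}u(R_1)^{\frac{N}{N-1}})}{u(R_1)^{\frac{q}{N-1}}}R_1^{N}\ \le\ C(N,q)\,\varepsilon_0^{-\frac{q}{N-1}}\int_{R_1}^\infty|u|^q r^{N-1}\,dr,
\]
and only here does the condition $p\ge q$ enter (to pass from $u(R_1)^{q/(N-1)}$ to $u(R_1)^{p/(N-1)}$ in the denominator). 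Separately, the region $\{u\le 1\}$ is handled directly via the embedding of Lemma~\ref{2.5}. Your sharpness argument via Moser sequences is essentially the same as the paper's.
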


It is only logical to wonder what kinds of inequalities will hold in situation $b>N$ after studying the Trudinger-Moser inequalities under the norm constraint $\|F(\nabla u)\|_{N}^{a}+\|u\|_{q}^{b}\leq 1$ (for $b\leq N$). Can we still have a valid Trudinger-Moser type inequality, in particular, if we would want to have the norm constraint $\|F(\nabla u)\|_{N}^{a}+\|u\|_{q}^{b}\leq 1$(for $b>N$)? Indeed, we will respond to this question by the following versions of the Trudinger-Moser inequality in $D^{N,q}(\R^{N})$ which are new even when $q=N$. The isotropic versions of these inequalities were studied earlier by Ibrahim, Masmoudi and Nakanishi \cite{9}, Lam and Lu \cite{14}, Masmoudi and Sani \cite{21}, \cite{22}, Lu and Tang \cite{18}, Lu, Tang and Zhu \cite{20}. The findings presented here are enhanced anisotropic versions of the aforementioned results.

\begin{theorem}\label{1.5}
Let $a,d>0$, $k>1$, and $p\geq q\geq 1$. Then there exists a constant $C=C(N,p,q,\beta,a,d,k)>0$ such that for all $u\in D^{N,q}(\R^{N})$, $\|F(\nabla u)\|_{N}^{a}+\|u\|_{q}^{kN}\leq 1$, there holds
$$
\int_{\R^{N}}\frac{\Phi_{N,q,\beta}(\lambda_{N}(1-\frac{\beta}{N})u^{\frac{N}{N-1}})}{(1+d\lvert u\rvert^{\frac{p}{N-1}(1-\frac{1}{k})(1-\frac{\beta}{N})})F^{0}(x)^{\beta}}\;\mathrm{d}x\leq C.
$$
Moreover, the inequality fails if $p<q$ or the constant $\lambda_{N}$ is replaced by any $\lambda>\lambda_{N}$.
\end{theorem}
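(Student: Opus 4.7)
The plan is to derive Theorem~\ref{1.5} from the exact-growth inequality of Theorem~\ref{1.4} via a pointwise reduction of the denominator followed by a Hölder interpolation, with the joint constraint $\|F(\nabla u)\|_{N}^{a}+\|u\|_{q}^{kN}\le 1$ absorbing the resulting critical-growth residual. Writing $\tau=\|F(\nabla u)\|_{N}$, $\sigma=\|u\|_{q}$, and $r=\tfrac{p}{N-1}(1-\tfrac{\beta}{N})$, one has $\tau,\sigma\le 1$ and $\sigma^{kN}\le 1-\tau^{a}$. The elementary identity $1+t^{r}\le(1+t^{r(1-1/k)})(1+t^{r/k})$ yields
\[
\frac{1}{1+d|u|^{r(1-1/k)}}\le C\cdot\frac{1+|u|^{r/k}}{1+d'|u|^{r}},
\]
so the target integral splits as $I\le C(J_{1}+J_{2})$, where $J_{1}$ carries the stronger denominator $1+d'|u|^{r}$ and $J_{2}$ an extra factor $|u|^{r/k}$. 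Since $p\ge q$ and $\tau\le 1$, Theorem~\ref{1.4} applies directly to $u$ and gives $J_{1}\le C\sigma^{q(1-\beta/N)}\le C$.

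For $J_{2}$, Hölder's inequality with conjugate exponents $k$ and $k/(k-1)$, together with $|u|^{r}/(1+d'|u|^{r})\le 1/d'$, produces
\[
J_{2}\le C\Bigl(\int_{\R^{N}}\tfrac{\Phi_{N,q,\beta}}{F^{0}(x)^{\beta}}\,dx\Bigr)^{1/k}\Bigl(\int_{\R^{N}}\tfrac{\Phi_{N,q,\beta}}{(1+d'|u|^{r})F^{0}(x)^{\beta}}\,dx\Bigr)^{(k-1)/k},
\]
and the second factor is again bounded by $C\sigma^{q(1-\beta/N)(k-1)/k}$ via Theorem~\ref{1.4}. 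The main obstacle is the first factor, which by Theorem~\ref{1.3} need not be uniformly bounded on the whole constraint set, so I split on the size of $\tau$. When $\tau\le \tfrac12$, I plan to use the pointwise bound $\Phi_{N,q,\beta}(s)\le s^{j_{0}}e^{s}/j_{0}!$, where $j_{0}$ is the starting index in the definition of $\Phi_{N,q,\beta}$, together with Theorem~\ref{1.1} applied after the rescaling $v=u/\tau$, with subcritical exponent $\lambda_{N}\tau^{N/(N-1)}<\lambda_{N}$ and auxiliary power $p'=j_{0}N/(N-1)$; this gives $\int\Phi_{N,q,\beta}/F^{0}(x)^{\beta}\,dx\le C\tau^{j_{0}N/(N-1)-q(1-\beta/N)}\sigma^{q(1-\beta/N)}\le C$, and feeding this into the Hölder product produces $J_{2}\le C$ in this regime. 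When $\tau\ge \tfrac12$, I rescale $v=u/\tau$ and apply Corollary~\ref{1.2} together with the sharp-constant asymptotic of Theorem~\ref{1.3} to get $\int\Phi_{N,q,\beta}/F^{0}(x)^{\beta}\,dx\le C\sigma^{q(1-\beta/N)}/[\tau^{q(1-\beta/N)}(1-\tau^{N})^{q(1-\beta/N)/N}]$; invoking $\sigma^{q(1-\beta/N)}\le(1-\tau^{a})^{q(1-\beta/N)/(kN)}$, the exponents of $(1-\tau^{a})$ and $(1-\tau^{N})$ appearing in the Hölder product for $J_{2}$ both equal $q(1-\beta/N)/(kN)$, so the blow-up of $(1-\tau^{N})^{-q(1-\beta/N)/(Nk)}$ as $\tau\uparrow 1$ is exactly compensated by $(1-\tau^{a})^{q(1-\beta/N)/(kN)}\sim(a/N)^{q(1-\beta/N)/(kN)}(1-\tau^{N})^{q(1-\beta/N)/(kN)}$, yielding $J_{2}\le C$ uniformly.

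For the sharpness, I plan to use anisotropic Moser-type concentrating sequences of the form $u_{n}(x)=G_{n}(F^{0}(x))$ with the standard logarithmic-plateau profile, adapted to the convex symmetrization, to show that $\lambda_{N}$ cannot be replaced by any larger constant. The necessity of $p\ge q$ will be established by testing against functions whose $L^{q}$-mass is concentrated at large $F^{0}$-distance, where a weakened denominator with $p<q$ fails to tame the critical growth of $\Phi_{N,q,\beta}$. The most delicate step I expect is the two-regime control of the critical-growth integral $\int\Phi_{N,q,\beta}/F^{0}(x)^{\beta}\,dx$, which is the principal technical novelty beyond the $b\le N$ case of Theorem~\ref{1.3}.
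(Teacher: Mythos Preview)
Your argument is correct, but it diverges from the paper's route in one substantive way. The paper does not attack Theorem~\ref{1.5} directly; it first observes (by the dilation $v(x)=u(\lambda x)$ with $\lambda=\|u\|_q^{-(k-1)q/N}$) that Theorems~\ref{1.5} and~\ref{1.6} are equivalent, and then proves Theorem~\ref{1.6}, where the norm constraint is $\|F(\nabla u)\|_N^{a}+\|u\|_q^{N}\le 1$. Under that constraint the critical integral $\int_{\{u>1\}}\Phi_{N,q,\beta}/F^{0}(x)^{\beta}\,dx$ is uniformly bounded by Theorem~\ref{1.3} (the case $b=N$), so a single H\"older split over $\{u>1\}$ into the factors $(\Phi/F^{0\,\beta})^{1/k}$ and $(\Phi/(|u|^{r}F^{0\,\beta}))^{1-1/k}$, with the second controlled by Theorem~\ref{1.4}, finishes the proof immediately; the set $\{u\le 1\}$ is handled as in \eqref{5.6}.

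Your approach instead stays with the constraint $\|F(\nabla u)\|_N^{a}+\|u\|_q^{kN}\le 1$ and therefore must face the fact that $\int\Phi_{N,q,\beta}/F^{0}(x)^{\beta}\,dx$ is \emph{not} uniformly bounded there. Your two-regime argument on $\tau=\|F(\nabla u)\|_N$ repairs this: for $\tau\le\tfrac12$ you use Theorem~\ref{1.1} at a subcritical level (with a constant uniform for $\lambda\le\lambda_N/2^{N/(N-1)}$, by monotonicity in $\lambda$), and for $\tau\ge\tfrac12$ you invoke the upper asymptotic ${\mathrm{ATMSC}}\le C(1-\tau^N)^{-q(1-\beta/N)/N}$ from Theorem~\ref{1.3}, so that after the H\"older product and the constraint $\sigma^{q(1-\beta/N)}\le(1-\tau^{a})^{q(1-\beta/N)/(kN)}$ the powers of $1-\tau^{a}$ and $1-\tau^{N}$ cancel exactly. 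This is legitimate and makes the exponent balance transparent, but it is noticeably heavier than the paper's scaling trick, which sidesteps the whole issue by moving to $b=N$. Your sharpness plan via anisotropic Moser sequences $u_n(x)=G_n(F^{0}(x))$ is the same idea the paper uses.
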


An equivalent version of Theorem \ref{1.5} is the following:
\begin{theorem}\label{1.6}
Let $a,d>0$, $k>1$, and $p\geq q\geq 1$. Then there exists a constant $C=C(N,p,q,\beta,a,d,k)>0$ such that for all $u\in D^{N,q}(\R^{N})$, $\|F(\nabla u)\|_{N}^{a}+\|u\|_{q}^{N}\leq 1$, there holds
$$
\int_{\R^{N}}\frac{\Phi_{N,q,\beta}(\lambda_{N}(1-\frac{\beta}{N})u^{\frac{N}{N-1}})}{(1+d\lvert u\rvert^{\frac{p}{N-1}(1-\frac{1}{k})(1-\frac{\beta}{N})})F^{0}(x)^{\beta}}\;\mathrm{d}x\leq C\|u\|_{q}^{q(1-\frac{1}{k})(1-\frac{\beta}{N})}.
$$
Moreover, the inequality fails if $p<q$ or the constant $\lambda_{N}$ is replaced by any $\lambda>\lambda_{N}$.
\end{theorem}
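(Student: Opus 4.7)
The approach is a scaling (dilation) argument showing that Theorem \ref{1.6} is genuinely equivalent to Theorem \ref{1.5}, so that the desired bound can be deduced directly from the latter. The plan is: given admissible $u$ for Theorem \ref{1.6}, construct a rescaled function $v(x)=u(\delta x)$ that belongs to the admissible class of Theorem \ref{1.5}, and then translate the conclusion back.

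First I would record the relevant scaling identities. By the chain rule and the positive $1$-homogeneity of $F$, setting $v(x)=u(\delta x)$ yields $\|F(\nabla v)\|_{N} = \|F(\nabla u)\|_{N}$, while a direct change of variables gives $\|v\|_{q} = \delta^{-N/q}\|u\|_{q}$. For a non-trivial $u$ with $\|F(\nabla u)\|_{N}^{a} + \|u\|_{q}^{N} \leq 1$, the natural choice is
\begin{equation*}
\delta := \|u\|_{q}^{q(1-1/k)/N},
\end{equation*}
which forces $\|v\|_{q}^{kN} = \|u\|_{q}^{N}$ and consequently $\|F(\nabla v)\|_{N}^{a} + \|v\|_{q}^{kN} = \|F(\nabla u)\|_{N}^{a} + \|u\|_{q}^{N} \leq 1$, placing $v$ in the admissible class of Theorem \ref{1.5}.

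Next I would transform the integral. Using the positive $1$-homogeneity of $F^{0}$, the change of variables $y=\delta x$ gives $F^{0}(x)^{\beta} = \delta^{-\beta}F^{0}(y)^{\beta}$ and $dx = \delta^{-N}\,dy$. Since $v(x) = u(y)$, both the numerator $\Phi_{N,q,\beta}(\lambda_{N}(1-\frac{\beta}{N})v^{\frac{N}{N-1}})$ and the denominator factor $1+d|v|^{\frac{p}{N-1}(1-\frac{1}{k})(1-\frac{\beta}{N})}$ transform to their counterparts in $u(y)$, so the integral of Theorem \ref{1.5} applied to $v$ equals $\delta^{\beta-N}$ times the integral of Theorem \ref{1.6} applied to $u$. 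Applying Theorem \ref{1.5} to $v$ to bound the former by $C$ and multiplying through by $\delta^{N-\beta} = \|u\|_{q}^{q(1-1/k)(1-\beta/N)}$ therefore yields exactly the right-hand side of Theorem \ref{1.6}. The case $u \equiv 0$ is immediate because every summand of $\Phi_{N,q,\beta}$ vanishes at the origin.

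For the sharpness claims (failure when $p<q$ or when $\lambda_{N}$ is replaced by some $\lambda>\lambda_{N}$), I would run the dilation in reverse: every admissible $w$ for Theorem \ref{1.5} arises as $w(y)=u(\delta y)$ from some admissible $u$ for Theorem \ref{1.6}, so a uniform bound in Theorem \ref{1.6} under these bad parameter choices would transfer through the same identity to a uniform bound in Theorem \ref{1.5}, contradicting the sharpness already established there. The main delicate point is the exponent bookkeeping --- the identity $(N-\beta)\cdot q(1-1/k)/N = q(1-1/k)(1-\beta/N)$ is precisely what makes the power of $\|u\|_{q}$ match on the right-hand side --- and apart from this arithmetic check the argument is entirely mechanical, since all the substantive analytic work has already been packaged into Theorem \ref{1.5}.
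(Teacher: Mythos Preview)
Your scaling reduction is correct and in fact reproduces verbatim one direction of the equivalence the paper itself records at the start of Section~6 (with your $\delta$ playing the role of their $\lambda$). The difficulty is a circularity: in this paper Theorem~\ref{1.5} is \emph{not} proved independently --- the authors establish the equivalence $1.5\Leftrightarrow 1.6$ and then give a direct proof of Theorem~\ref{1.6}, so that Theorem~\ref{1.5} is obtained \emph{from} Theorem~\ref{1.6}, not the other way around. Your argument therefore assumes the very result you are trying to prove. The same issue infects your sharpness argument: the failure for $p<q$ and for $\lambda>\lambda_N$ in Theorem~\ref{1.5} is, in the paper, inherited from Theorem~\ref{1.6}, so you cannot appeal to it.

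The paper's direct proof of Theorem~\ref{1.6} proceeds instead by splitting $\R^N=\{|u|>1\}\cup\{|u|\le 1\}$. On $\{|u|\le 1\}$ the integrand is dominated by its lowest-order term and controlled by $\|u\|_q^{q(1-\beta/N)}\le \|u\|_q^{q(1-1/k)(1-\beta/N)}$. On $\{|u|>1\}$ one writes the integrand as a product and applies H\"older with exponents $k$ and $k/(k-1)$: the first factor (no denominator) is bounded by Theorem~\ref{1.3} with $b=N$, and the second factor (full denominator $|u|^{\frac{p}{N-1}(1-\beta/N)}$) is bounded by Theorem~\ref{1.4}; combining the two powers of $\|u\|_q$ gives exactly $\|u\|_q^{q(1-1/k)(1-\beta/N)}$. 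The sharpness is then established directly by testing against the anisotropic Moser sequence $\{u_n\}$ and, for the $p<q$ claim, by a fairly delicate comparison between the subcritical supremum $A_\lambda$ and the critical one. You would need to supply this (or an equivalent) direct argument to close the gap.
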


As consequences of our theorems, there are:
\begin{corollary}\label{1.7}
Let $k\geq 1$. There holds
\begin{align}
    &\sup\limits_{\|F(\nabla u)\|_{N}^{kN}+\|u\|_{N}^{kN}\leq 1}
    \int_{\R^{N}}\frac{\Phi_{N,\beta}(\lambda_{N}(1-\frac{\beta}{N})u^{\frac{N}{N-1}})}{(1+d\lvert u\rvert^{\frac{N}{N-1}(1-\frac{1}{k})(1-\frac{\beta}{N})})F^{0}(x)^{\beta}}\;\mathrm{d}x<\infty,\notag\\
    &\sup\limits_{\|F(\nabla u)\|_{N}^{N}+\|u\|_{N}^{N}\leq 1}
    \frac{1}{\|u\|_{N}^{N(1-\frac{1}{k})(1-\frac{\beta}{N})}}
    \int_{\R^{N}}\frac{\Phi_{N,\beta}(\lambda_{N}(1-\frac{\beta}{N})u^{\frac{N}{N-1}})}{(1+d\lvert u\rvert^{\frac{N}{N-1}(1-\frac{1}{k})(1-\frac{\beta}{N})})F^{0}(x)^{\beta}}\;\mathrm{d}x<\infty.\notag
\end{align}
\end{corollary}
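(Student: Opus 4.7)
The plan is to obtain both suprema as direct specializations of Theorems \ref{1.5} and \ref{1.6} to the choice $q=N$, $p=N$ (which is admissible, since $p\geq q\geq 1$ is automatic), together with an appeal to Theorem \ref{1.3} for the degenerate value $k=1$. Since the inequalities in Corollary \ref{1.7} match the generic forms in Theorems \ref{1.5}--\ref{1.6} once the correct parameter identifications are made, the main work is parameter bookkeeping rather than analysis.

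For the first supremum, assuming $k>1$, I would apply Theorem \ref{1.5} with $a=kN$, $q=N$, and $p=N$. Then the constraint $\|F(\nabla u)\|_{N}^{a}+\|u\|_{q}^{kN}\leq 1$ reads $\|F(\nabla u)\|_{N}^{kN}+\|u\|_{N}^{kN}\leq 1$, and the exponent $\frac{p}{N-1}(1-\frac{1}{k})(1-\frac{\beta}{N})$ in the denominator of the integrand collapses to $\frac{N}{N-1}(1-\frac{1}{k})(1-\frac{\beta}{N})$, reproducing Corollary \ref{1.7} verbatim. Theorem \ref{1.5} supplies a bound by a constant $C=C(N,\beta,d,k)$ that is uniform over the admissible class, so the finiteness of the supremum is immediate.

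For the second supremum, still with $k>1$, I would invoke Theorem \ref{1.6} with $a=N$, $q=N$, $p=N$. The constraint already reads $\|F(\nabla u)\|_{N}^{N}+\|u\|_{N}^{N}\leq 1$, while the right-hand side of Theorem \ref{1.6} is $C\|u\|_{q}^{q(1-\frac{1}{k})(1-\frac{\beta}{N})}=C\|u\|_{N}^{N(1-\frac{1}{k})(1-\frac{\beta}{N})}$; dividing through by this power of $\|u\|_{N}$ and taking the supremum yields precisely the stated quotient inequality.

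The remaining case is $k=1$, which is not covered by Theorems \ref{1.5}--\ref{1.6} (both require $k>1$). When $k=1$ the factor $(1-\frac{1}{k})$ vanishes, so the weight $1+d\lvert u\rvert^{\frac{N}{N-1}(1-\frac{1}{k})(1-\frac{\beta}{N})}$ collapses to the constant $1+d$, and up to this harmless factor both integrals reduce to
$$
\int_{\R^{N}}\frac{\Phi_{N,\beta}\!\left(\lambda_{N}(1-\tfrac{\beta}{N})\lvert u\rvert^{\frac{N}{N-1}}\right)}{F^{0}(x)^{\beta}}\,\mathrm{d}x,
$$
which is the critical anisotropic singular Trudinger-Moser inequality on $\{\|F(\nabla u)\|_{N}^{N}+\|u\|_{N}^{N}\leq 1\}$; this is the specialization $a=b=N$, $q=N$ of Theorem \ref{1.3}, and I would close the argument by quoting it. The only mildly delicate point is the $k=1$ endpoint, where one must switch cited theorems; no genuinely analytic obstacle arises, as all of the heavy lifting is already packed into the theorems established above.
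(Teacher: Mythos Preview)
Your proposal is correct and follows exactly the approach the paper intends: the paper offers no explicit proof of Corollary~\ref{1.7}, stating it merely ``as consequences of our theorems,'' i.e., as the specialization $p=q=N$ of Theorems~\ref{1.5} and~\ref{1.6}. Your treatment of the endpoint $k=1$ via Theorem~\ref{1.3} (with $a=b=N$, $q=N$) is a welcome addition, since Theorems~\ref{1.5}--\ref{1.6} are stated only for $k>1$ while the corollary allows $k\geq 1$; the paper does not address this case explicitly.
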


Since it can be verified easily that the constant $C(N,p,q,\beta,a,k)$ in Theorem \ref{1.6} tends to a constant $C(N,p,q,\beta,a)$ as $k\rightarrow\infty$, by Fatou's lemma, we get:

\begin{corollary}\label{1.8}
Let $a,d>0$, and $p\geq q\geq 1$. Then there exists a constant $C=C(N,p,q,\beta,a,d)>0$ such that for all $u\in D^{N,q}(\R^{N})$, $\|F(\nabla u)\|_{N}^{a}+\|u\|_{q}^{N}\leq 1$, there holds
$$
\int_{\R^{N}}\frac{\Phi_{N,q,\beta}(\lambda_{N}(1-\frac{\beta}{N})u^{\frac{N}{N-1}})}{(1+d\lvert u\rvert^{\frac{p}{N-1}(1-\frac{\beta}{N})})F^{0}(x)^{\beta}}\;\mathrm{d}x\leq C\|u\|_{q}^{q(1-\frac{\beta}{N})}.
$$
Moreover, the inequality fails if $p<q$ or the constant $\lambda_{N}$ is replaced by any $\lambda>\lambda_{N}$.
\end{corollary}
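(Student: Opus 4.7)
The strategy is the one indicated in the remark preceding the corollary: take $k\to\infty$ in Theorem \ref{1.6} and invoke Fatou's lemma. Since the norm constraint of Theorem \ref{1.6}, namely $\|F(\nabla u)\|_N^a+\|u\|_q^N\leq 1$, coincides with the constraint here, no rescaling is required.

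Fix $u\in D^{N,q}(\R^N)$ with $\|F(\nabla u)\|_N^a+\|u\|_q^N\leq 1$. For every $k>1$, Theorem \ref{1.6} yields a constant $C_k=C(N,p,q,\beta,a,d,k)$ such that
\begin{equation*}
\int_{\R^N}\frac{\Phi_{N,q,\beta}\bigl(\lambda_N(1-\frac{\beta}{N})|u|^{\frac{N}{N-1}}\bigr)}{\bigl(1+d|u|^{\frac{p}{N-1}(1-\frac{1}{k})(1-\frac{\beta}{N})}\bigr)F^0(x)^\beta}\,\mathrm{d}x\leq C_k\,\|u\|_q^{q(1-\frac{1}{k})(1-\frac{\beta}{N})}.
\end{equation*}
Denote the left-hand integrand by $f_k$, and let $f$ denote the integrand of Corollary \ref{1.8} (obtained by replacing $1-\frac{1}{k}$ with $1$). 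Since $s\mapsto |u(x)|^s$ and $s\mapsto 1/(1+ds)$ are continuous, we have $f_k(x)\to f(x)$ pointwise a.e. as $k\to\infty$. The $f_k$ are non-negative, so Fatou's lemma gives
\begin{equation*}
\int_{\R^N} f(x)\,\mathrm{d}x\leq \liminf_{k\to\infty}\int_{\R^N} f_k(x)\,\mathrm{d}x\leq \liminf_{k\to\infty} C_k\,\|u\|_q^{q(1-\frac{1}{k})(1-\frac{\beta}{N})}.
\end{equation*}
The constraint forces $\|u\|_q\leq 1$, so $\|u\|_q^{q(1-\frac{1}{k})(1-\frac{\beta}{N})}\to \|u\|_q^{q(1-\frac{\beta}{N})}$ as $k\to\infty$; together with $C_k\to C(N,p,q,\beta,a,d)$, this delivers the asserted inequality.

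The main obstacle is verifying that $C_k$ stays bounded, and indeed converges, as $k\to\infty$. This amounts to tracing through the proof of Theorem \ref{1.6} and checking that the parameter $k$ enters the constant only through benign expressions (typically factors of the form $(1-\frac{1}{k})^{-\gamma}$ with $\gamma>0$ fixed) that remain bounded and convergent as $k\to\infty$. For the sharpness assertions, the failure when $p<q$ or $\lambda>\lambda_N$ follows by exhibiting an anisotropic Moser-type sequence $\{u_n\}$ supported in a Wulff ball $\{F^0(x)<r\}$ and concentrated at the origin, normalized so that $\|F(\nabla u_n)\|_N^a+\|u_n\|_q^N\leq 1$, for which the ratio of the left-hand side to $\|u_n\|_q^{q(1-\frac{\beta}{N})}$ diverges; this parallels the sharpness arguments already carried out for Theorems \ref{1.4} and \ref{1.6}.
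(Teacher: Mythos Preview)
Your proposal is correct and follows exactly the route the paper indicates: the paper's entire argument for Corollary \ref{1.8} is the one-line remark preceding it, namely letting $k\to\infty$ in Theorem \ref{1.6}, using that the constant $C(N,p,q,\beta,a,d,k)$ tends to a finite limit, and applying Fatou's lemma. Your additional care in spelling out the pointwise convergence of the integrands and in flagging that the boundedness of $C_k$ must be checked by tracing the proof of Theorem \ref{1.6} is appropriate and matches what the paper leaves implicit.
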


The paper is structured as shown below. We will review a few lemmas which are necessary to support our theorems in Section 2. Two lemmas which are crucial in the following proofs are established in Section 3. We provide the proof for Theorem \ref{1.1} in Section 4. And we provide the proof for Theorem \ref{1.4} in Section 5. Moreover, Theorem \ref{1.5} and Theorem \ref{1.6} are established in Section 6.

\section{Preliminaries}

In this section, we provide some preliminary information we will need later. Let $F:\R^{N}\rightarrow[0,+\infty)$ be a convex function of class $C^{2}(\R^{N}\setminus\{0\})$, which is even and positively homogeneous of degree 1, this will induce
$$
F(t\xi)=\lvert t\rvert F(\xi)\;\;\;\mathrm{for}\;\mathrm{any}\;t\in\R,\;\xi\in\R^{N}.
$$

We further assume that $F(\xi)>0$ for any $\xi\neq 0$ and $Hess(F^{2})$ is positive definite in $\R^{N}\setminus\{0\}$, leading $Hess(F^{N})$ is positive definite in $\R^{N}\setminus\{0\}$ by Xie and Gong \cite{33}. For such a function $F$, there are two constants $0<a\leq b<\infty$ such that $a\lvert\xi\rvert\leq F(\xi)\leq b\lvert\xi\rvert$ for any $\xi\in\R^{N}$. A typical example is $F(\xi)=(\sum_{i}\lvert \xi_{i}\rvert^{q})^{\frac{1}{q}}$ for $q\in(1,+\infty)$. Let $F^{0}$ be the support function of $K:=\{x\in\R^{N}:F(x)\leq 1\}$, which is defined by
$$
F^{0}(x):=\sup\limits_{\xi\in K}\langle x,\xi\rangle,
$$
then $F^{0}:\R^{N}\rightarrow[0,+\infty)$ is also a convex, homogeneous function of class $C^{2}(\R^{N}\setminus\{0\})$.

From \cite{3}, $F^{0}$ is dual to $F$ in the sense that
$$
F^{0}(x)=\sup\limits_{\xi\neq 0}\frac{\langle x,\xi\rangle}{F(\xi)}, \;F(x)=\sup\limits_{\xi\neq 0}\frac{\langle x,\xi\rangle}{F^{0}(\xi)}.
$$

Consider map $\phi:S^{N-1}\rightarrow\R^{N}$, $\phi(\xi)=F_{\xi}(\xi)$. Its image $\phi(S^{N-1})$ is a smooth, convex hypersurface in $\R^{N}$, known as the Wulff shape (or equilibrium crystal shape) of $F$. As a result, $\phi(S^{N-1})=\{x\in\R^{N}|F^{0}(x)=1\}$(see \cite{31}, Proposition 1). Denote $W_{r}(x_{0})=\{x\in\R^{N}:F^{0}(x-x_{0})\leq r\}$, in particular, we call $W_{r}$ a Wulff ball with radius $r$ and center at the origin and $\kappa_{N}$ as the Lebesgue measure of $W_{1}$.

Accordingly, we provide some simple properties of $F$, as a direct consequence of the assumption on $F$, also found in \cite{4, 7, 31, 32}.
\begin{lemma}\label{2.1}
There are\\
(i) $|F(x)-F(y)|\leq F(x+y) \leq F(x)+F(y)$;\\
(ii) $\frac{1}{C}\leq |\nabla F(x)|\leq C$ and  $\frac{1}{C}\leq |\nabla F^{0}(x)|\leq C$ for some $C>0$ and any $x\neq 0$; \\
(iii) $\langle x, \nabla F(x)\rangle= F(x), \langle x, \nabla F^{0}(x)\rangle = F^{0}(x)$ for  any $x\neq 0$;\\
(iv) $\int\limits_{\partial\mathcal{W}_r}\frac{1}{|\nabla F^{0}(x)|}d\sigma= N\kappa_Nr^{N-1}$;\\
(v) $F_{\xi_i}(\xi)\xi_i=F(\xi),  F_{\xi_i}(t\xi)=sgn(t) F_{\xi_i}(\xi)$ for any $\xi \neq 0$ and $t\neq 0$;\\
(vi) $F(\nabla F^{0}(x))=1, F^0(\nabla F(\xi))=1$   for any   $x, \xi \neq 0$;\\
(vii)  $F(\xi)F^0_x(\nabla F(\xi))=\xi, F^0(x)F_\xi(\nabla F^0(x))=x$   for any   $x, \xi \neq 0$.
\end{lemma}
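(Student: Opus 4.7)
The plan is to verify the seven properties in the order they are listed: (i)-(v) are direct consequences of the assumed convexity, positive $1$-homogeneity, evenness, and $C^{2}$-regularity of $F$ and $F^{0}$, whereas (vi)-(vii) encode the Finsler duality and carry the real content.

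For (i), subadditivity follows from convexity plus $1$-homogeneity via
\[
F(x+y) \;=\; 2\,F\!\left(\tfrac{x+y}{2}\right) \;\le\; F(x) + F(y),
\]
and the reverse inequality $|F(x)-F(y)|\le F(x+y)$ follows by applying subadditivity to $F(x) = F((x+y)+(-y))$ and invoking evenness. For (iii) and the first identity in (v), differentiating $F(t\xi)=tF(\xi)$ in $t$ at $t=1$ (for $t>0$) gives Euler's identity $\langle \xi, \nabla F(\xi)\rangle = F(\xi)$, while differentiating $F(t\xi)=|t|F(\xi)$ in $\xi_{i}$ yields $F_{\xi_{i}}(t\xi)=\operatorname{sgn}(t)F_{\xi_{i}}(\xi)$ for $t\ne 0$. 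The same calculations apply verbatim to $F^{0}$. Property (ii) then follows because $\nabla F$ and $\nabla F^{0}$ are $0$-homogeneous, continuous, and nowhere zero on $\R^{N}\setminus\{0\}$, so both attain a positive minimum and a finite maximum on the compact sphere $S^{N-1}$. For (iv) I would apply the coarea formula to $F^{0}$:
\[
\kappa_{N}\,r^{N} \;=\; |W_{r}| \;=\; \int_{0}^{r}\!\int_{\partial W_{s}}\frac{d\sigma}{|\nabla F^{0}(x)|}\,ds,
\]
and differentiate in $r$.

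The substantive step is (vi)-(vii), which I would handle via the Legendre duality between $\tfrac{1}{2}F^{2}$ and $\tfrac{1}{2}(F^{0})^{2}$; this duality follows from $F^{0}(x) = \sup_{F(\xi)\le 1}\langle x,\xi\rangle$ together with the positive definiteness of $\operatorname{Hess}(F^{2})$ assumed at the outset. Since
\[
\nabla\!\left(\tfrac{1}{2}F^{2}\right)\!(\xi) = F(\xi)\nabla F(\xi), \qquad
\nabla\!\left(\tfrac{1}{2}(F^{0})^{2}\right)\!(x) = F^{0}(x)\nabla F^{0}(x),
\]
these two maps are mutually inverse on $\R^{N}\setminus\{0\}$. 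Setting $y := F(\xi)\nabla F(\xi)$, the inversion reads $F^{0}(y)\nabla F^{0}(y) = \xi$. Computing $\langle y,\xi\rangle$ two ways via Euler's identity yields $F(\xi)^{2} = \langle y,\xi\rangle = F^{0}(y)^{2}$, hence $F^{0}(y)=F(\xi)$; by $1$-homogeneity of $F^{0}$ this forces $F^{0}(\nabla F(\xi))=1$, which is half of (vi). Using $0$-homogeneity of $\nabla F^{0}$, the identity $F^{0}(y)\nabla F^{0}(y)=\xi$ then reduces to $F(\xi)\,\nabla F^{0}(\nabla F(\xi)) = \xi$, i.e.\ the first identity of (vii). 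The remaining halves of (vi) and (vii) follow by swapping the roles of $F$ and $F^{0}$.

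The only genuine obstacle is the Legendre duality underlying (vi)-(vii); everything else is mechanical from $1$-homogeneity, convexity, and the coarea formula. Since all of these identities are classical for Finsler norms and the references \cite{3, 4, 7, 31, 32} already contain complete proofs, I would keep the exposition terse and cite those sources for the duality step rather than reproducing it in full here.
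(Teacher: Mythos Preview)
Your proposal is correct and in fact considerably more detailed than what the paper does: the paper does not prove Lemma~\ref{2.1} at all, but simply states that these properties are ``a direct consequence of the assumption on $F$, also found in \cite{4, 7, 31, 32}.'' Your sketches for (i)--(v) via homogeneity, convexity, and the coarea formula, and for (vi)--(vii) via the Legendre duality of $\tfrac{1}{2}F^{2}$ and $\tfrac{1}{2}(F^{0})^{2}$, are the standard arguments found in those references and are sound; your closing suggestion to keep the exposition terse and cite the literature is exactly what the paper opts for.
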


The convex symmetrization defined in \cite{3} will be used. It generalizes the Schwarz symmetrization(see \cite{29}). Let us consider a measurable function $u$ on $\Omega\subset\R^{N}$, one-dimensional decreasing rearrangement of $u$ is
$$
u^{\sharp}(t)=\sup\{s\geq 0:\;\lvert\{x\in\Omega:u(x)\geq s\}\rvert >t\},\;\;\;t\in\R.
$$
The convex symmetrization of $u$ with respect to $F$ is defined by
$$
u^{\star}(x)=u^{\sharp}\left(\kappa_{N}F^{0}(x)^{N}\right),\;\;\;x\in\Omega^{\star},
$$
where $\kappa_{N}F^{0}(x)^{N}$ is the Lebesgue measure of a homothetic Wulff ball with radius $F^{0}(x)$ and $\Omega^{\star}$ is a homothetic Wulff ball centered at the origin having the same measure as $\Omega$. Recalling that, the Schwarz symmetrization of $u$ is defined by
$$
u^{\ast}(x)=u^{\sharp}(\omega_{N}\lvert x\rvert^{N}),\;\;\;x\in\Omega^{\ast},
$$
where $\omega_{N}\lvert x\rvert^{N}$ is the Lebesgue measure of a Euclidean ball with radius $\lvert x\rvert$ and $\Omega^{\ast}$ is a Euclidean ball centered at the origin having the same measure as $\Omega$. As a result, it is simple to determine that the Schwarz symmetrization is the convex symmetrization for $F=\lvert\cdot\rvert$.

In addition, the convex symmetrization has the following characteristic.
\begin{lemma}\label{2.2}\rm{(P\"{o}lya-Szeg\"{o} Inequality)}
If $u\in D^{N,q}(\R^{N})$, then $u^{\star}\in D^{N,q}(\R^{N})$, and
\begin{align}
    &\int_{\R^{N}}\lvert u\rvert^{N}\;\mathrm{d}x\geq\int_{\R^{N}}\lvert u^{\star}\rvert^{N}\;\mathrm{d}x,\;\;\;\mathrm{the\;equation\;holds\;if\;and\;only\;if}\;u\geq 0,\notag\\
    &\int_{\R^{N}}F^{N}(\nabla u)\;\mathrm{d}x\geq\int_{\R^{N}}F^{N}(\nabla u^{\star})\;\mathrm{d}x.\notag
\end{align}
\end{lemma}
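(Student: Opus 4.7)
The plan is to prove the $L^{N}$ comparison and the anisotropic Dirichlet comparison separately, viewing the second as the anisotropic Pólya–Szegő inequality and reducing it to the Wulff isoperimetric inequality. Both statements hinge on the fact that $u^{\sharp}$ is defined only via super-level sets $\{u\ge s\}$ with $s\ge 0$, so $u^{\star}$ is effectively the convex rearrangement of the positive part $u_{+}:=\max\{u,0\}$ and is in particular nonnegative. I will first assume $u\in C_{0}^{\infty}(\R^{N})$ and recover the general statement by density in $D^{N,q}(\R^{N})$ together with lower semicontinuity of both functionals.

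For the $L^{N}$ comparison, a change of variable $r=F^{0}(x)$ combined with property (iv) of Lemma \ref{2.1} gives
$$
\int_{\R^{N}}(u^{\star})^{N}\,\mathrm{d}x = N\kappa_{N}\int_{0}^{\infty}u^{\sharp}(\kappa_{N}r^{N})^{N}r^{N-1}\,\mathrm{d}r = \int_{0}^{\infty}u^{\sharp}(s)^{N}\,\mathrm{d}s.
$$
The standard layer-cake identity gives $\int_{0}^{\infty}u^{\sharp}(s)^{N}\,\mathrm{d}s=\int_{\R^{N}}(u_{+})^{N}\,\mathrm{d}x$, since $u^{\sharp}$ and $u_{+}$ share the distribution function $s\mapsto|\{u\ge s\}|$ on $[0,\infty)$. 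Hence $\|u^{\star}\|_{N}^{N}=\|u_{+}\|_{N}^{N}\le\|u\|_{N}^{N}$, with equality exactly when $u_{-}\equiv 0$, i.e.\ $u\ge 0$. The analogous $L^{q}$ identity $\|u^{\star}\|_{q}\le\|u\|_{q}$ yields $u^{\star}\in L^{q}(\R^{N})$.

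For the gradient inequality, set $\mu(t):=|\{u>t\}|$. By the coarea formula
$$
\int_{\R^{N}}F^{N}(\nabla u)\,\mathrm{d}x = \int_{-\infty}^{\infty}\!\int_{\{u=\tau\}}\frac{F^{N}(\nabla u)}{|\nabla u|}\,\mathrm{d}\mathcal{H}^{N-1}\,\mathrm{d}\tau,
$$
and $-\mu'(\tau)=\int_{\{u=\tau\}}|\nabla u|^{-1}\,\mathrm{d}\mathcal{H}^{N-1}$ for a.e.\ $\tau$. Hölder's inequality with exponents $N$ and $N/(N-1)$ applied on each level set yields
$$
\left(\int_{\{u=\tau\}}\frac{F(\nabla u)}{|\nabla u|}\,\mathrm{d}\mathcal{H}^{N-1}\right)^{N} \le \left(\int_{\{u=\tau\}}\frac{F^{N}(\nabla u)}{|\nabla u|}\,\mathrm{d}\mathcal{H}^{N-1}\right)\bigl(-\mu'(\tau)\bigr)^{N-1}.
$$
Since $F$ is even and positively $1$-homogeneous, the left-hand side equals the anisotropic perimeter $P_{F}(\{u>\tau\})$, which by the Wulff isoperimetric inequality is bounded below by $N\kappa_{N}^{1/N}\mu(\tau)^{(N-1)/N}$. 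Rearranging and integrating in $\tau$ provides a lower bound for $\int F^{N}(\nabla u)\,\mathrm{d}x$ depending only on $\mu$ and $\mu'$. A direct computation using property (iv) of Lemma \ref{2.1} shows that for $u^{\star}$, whose super-level sets are Wulff balls on which $|\nabla u^{\star}|$ and $F(\nabla u^{\star})/|\nabla u^{\star}|$ are constant, equality holds simultaneously in the Wulff inequality and in the Hölder step; hence the analogous slice integral for $u^{\star}$ equals that lower bound. Integrating in $\tau$ gives $\int F^{N}(\nabla u)\,\mathrm{d}x\ge\int F^{N}(\nabla u^{\star})\,\mathrm{d}x$, and together with the $L^{q}$ bound above this also establishes $u^{\star}\in D^{N,q}(\R^{N})$.

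The main obstacle is the possible degeneracy on critical level sets where $\nabla u$ vanishes on a set of positive $\mathcal{H}^{N-1}$-measure: on such sets the expression for $-\mu'(\tau)$ is only the absolutely continuous part and the Hölder chain requires care. This is the standard technical point in Pólya–Szegő arguments and is handled either by excluding critical values (a Lebesgue-null set for smooth $u$ by Morse–Sard) or by truncating $u$ by $(u-\varepsilon)_{+}$ and passing to the limit $\varepsilon\downarrow 0$, using the weak lower semicontinuity of the anisotropic Dirichlet energy and monotone convergence for the $L^{N}$ and $L^{q}$ terms.
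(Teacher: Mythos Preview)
The paper does not actually prove Lemma~\ref{2.2}; it is stated in the Preliminaries section as a known result inherited from the theory of convex symmetrization in \cite{3} (Alvino--Ferone--Trombetti--Lions), together with the classical symmetrization background in \cite{29}. So there is no ``paper's own proof'' to compare against.

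Your argument is correct and is precisely the standard route from \cite{3}: the $L^{N}$ (and $L^{q}$) comparison follows from the layer-cake observation that, with the definition $u^{\sharp}(t)=\sup\{s\ge 0:|\{u\ge s\}|>t\}$, the convex rearrangement $u^{\star}$ is equimeasurable with $u_{+}$, giving $\|u^{\star}\|_{p}=\|u_{+}\|_{p}\le\|u\|_{p}$ with equality iff $u\ge 0$; and the anisotropic gradient inequality is obtained by slicing via coarea, applying H\"older on each level set, invoking the Wulff isoperimetric inequality $P_{F}(E)\ge N\kappa_{N}^{1/N}|E|^{(N-1)/N}$, and noting that both H\"older and Wulff are saturated for $u^{\star}$. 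Your treatment of the critical-set degeneracy (Morse--Sard for smooth $u$, then density plus weak lower semicontinuity of $\int F^{N}(\nabla\cdot)$) is the standard fix. One small point worth stating explicitly in the density step: to pass from $C_{0}^{\infty}$ to $D^{N,q}$ you use that rearrangement is nonexpansive in $L^{q}$ to get $u_{n}^{\star}\to u^{\star}$ in $L^{q}$, then boundedness of $\int F^{N}(\nabla u_{n}^{\star})$ plus weak compactness and lower semicontinuity to conclude; this is routine but not quite covered by ``lower semicontinuity of both functionals'' alone.
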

\begin{lemma}\label{2.3}\rm{(Hardy-Littlewood Inequality)}
Let $f$ and $g$ be nonnegative functions on $\R^{N}$, vanishing at infinity. Then
$$
\int_{\R^{N}}f(x)g(x)\;\mathrm{d}x\leq \int_{\R^{N}}f^{\star}(x)g^{\star}(x)\;\mathrm{d}x.
$$
\end{lemma}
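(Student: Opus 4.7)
The plan is to follow the standard proof of the Hardy--Littlewood rearrangement inequality, noting that the only geometric property of Euclidean balls used in the classical argument is that concentric balls are nested, and this property is shared by the origin-centered Wulff balls $W_r$ that serve as the superlevel sets of the convex symmetrization $f^{\star}$.

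First I would apply the layer-cake representation: for any nonnegative measurable function $h$ vanishing at infinity, $h(x)=\int_0^{\infty}\chi_{\{h>t\}}(x)\,dt$. Substituting this for both $f$ and $g$ and using Fubini gives
$$
\int_{\R^{N}} f(x)g(x)\,dx=\int_0^{\infty}\!\!\int_0^{\infty}\bigl|\{f>t\}\cap\{g>s\}\bigr|\,dt\,ds,
$$
and applying the same representation to $f^{\star}$ and $g^{\star}$ yields the analogous identity on the right-hand side. Thus the inequality reduces to the pointwise (in $(t,s)$) statement
$$
\bigl|\{f>t\}\cap\{g>s\}\bigr|\le\bigl|\{f^{\star}>t\}\cap\{g^{\star}>s\}\bigr|.
$$

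The key observation is now purely geometric. By the definition $f^{\star}(x)=f^{\sharp}(\kappa_N F^0(x)^N)$ and the equimeasurability of the one-dimensional rearrangement, each superlevel set $\{f^{\star}>t\}$ is precisely a Wulff ball $W_{r(t)}$ centered at the origin with $|W_{r(t)}|=|\{f>t\}|$; the same holds for $g^{\star}$. Since two origin-centered Wulff balls are automatically nested ($W_{r_1}\subset W_{r_2}$ whenever $r_1\le r_2$), we obtain
$$
\bigl|\{f^{\star}>t\}\cap\{g^{\star}>s\}\bigr|=\min\bigl(|\{f^{\star}>t\}|,\,|\{g^{\star}>s\}|\bigr)=\min\bigl(|\{f>t\}|,\,|\{g>s\}|\bigr),
$$
whereas trivially $|\{f>t\}\cap\{g>s\}|\le\min(|\{f>t\}|,|\{g>s\}|)$. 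Integrating in $(t,s)$ concludes the proof.

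I do not expect any substantial obstacle; the argument is a transparent adaptation of the classical proof with origin-centered Wulff balls replacing Euclidean balls. The only mildly delicate step is justifying Fubini in the layer-cake identity, which can be handled either by first truncating $f$ and $g$ to bounded functions of compact support and then invoking monotone convergence, or by noting that both iterated integrals are manifestly non-negative so Tonelli applies directly.
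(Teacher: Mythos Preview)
Your proof is correct and follows the standard layer-cake/Tonelli argument for the Hardy--Littlewood inequality, with the single observation that the superlevel sets of $f^{\star}$ are origin-centered Wulff balls, which are nested. The paper itself does not supply a proof of this lemma; it is stated in Section~2 as a preliminary result without proof, so there is nothing to compare against beyond noting that your argument is the expected one.
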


One can easily arrive at a variant of Lemma 2.2 in \cite{19}.
\begin{lemma}\label{2.4}
Given any sequence $a=\{a_{k}\}_{k\geq 0}$, let
$$
\|a\|_{1}=\sum\limits_{k=0}^{\infty}\lvert a_{k}\rvert,\;\;\;
\|a\|_{N}=\left(\sum\limits_{k=0}^{\infty}\lvert a_{k}\rvert^{N}\right)^{\frac{1}{N}},\;\;\;
\|a\|_{e}=\left(\sum\limits_{k=0}^{\infty}\lvert a_{k}\rvert^{q}e^{k}\right)^{\frac{1}{q}}
$$
and
$$
\mu(h)=\inf\{\|a\|_{e}:\;\|a\|_{1}=h,\|a\|_{N}\leq 1\}.
$$
Then for $h>1$,
$$
\mu(h)\sim\frac{\exp(\frac{h^{\frac{N}{N-1}}}{q})}{h^{\frac{1}{N-1}}}.
$$
\end{lemma}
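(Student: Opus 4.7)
The plan is to establish matching upper and lower estimates $\mu(h)\asymp h^{-1/(N-1)}\exp(h^{N/(N-1)}/q)$ for $h>1$, both driven by the same heuristic: the constraint $\|a\|_{1}=h$ forces total mass $h$, the constraint $\|a\|_{N}\le 1$ forbids that mass from concentrating on fewer than roughly $h^{N/(N-1)}$ indices, and the exponential weight $e^{k}$ in $\|a\|_{e}$ then penalises this unavoidable spread at precisely the stated rate.

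For the upper bound I would exhibit the natural trial sequence that realises the three constraints in balance. Put $K:=\lceil h^{N/(N-1)}\rceil$, let $a_{k}=h/K$ for $0\le k<K$ and $a_{k}=0$ otherwise. Then $\|a\|_{1}=h$ and $\|a\|_{N}^{N}=h^{N}/K^{N-1}\le 1$ by the choice of $K$, while a direct geometric-sum computation gives
\[
\|a\|_{e}^{q}\le\Big(\frac{h}{K}\Big)^{q}\cdot\frac{e^{K}}{e-1}\lesssim h^{-q/(N-1)}\exp\bigl(h^{N/(N-1)}\bigr),
\]
and hence $\mu(h)\lesssim h^{-1/(N-1)}\exp(h^{N/(N-1)}/q)$.

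For the lower bound I would take an arbitrary admissible $a\ge 0$ with $\|a\|_{1}=h$ and $\|a\|_{N}\le 1$, set $M:=\|a\|_{e}$, and split at an integer cutoff $K$:
\[
h=\sum_{k\le K}a_{k}+\sum_{k>K}a_{k}.
\]
H\"older with exponents $(N,N/(N-1))$ controls the initial segment by $(K+1)^{(N-1)/N}\|a\|_{N}\le (K+1)^{(N-1)/N}$, while H\"older with $(q,q/(q-1))$ applied to $a_{k}=(a_{k}e^{k/q})e^{-k/q}$ (or, when $q=1$, the pointwise bound $a_{k}\le Me^{-k}$), followed by summing the geometric tail, gives $\sum_{k>K}a_{k}\le C_{q}Me^{-(K+1)/q}$. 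Combining with the split above and choosing $K$ so that $K+1$ sits within a bounded distance $\Delta>0$ below $h^{N/(N-1)}$, a first-order Taylor expansion of $x\mapsto x^{(N-1)/N}$ at $x=h^{N/(N-1)}$ produces the crucial deficit $(K+1)^{(N-1)/N}\le h-c_{N}\Delta h^{-1/(N-1)}$, and substituting yields $M\gtrsim h^{-1/(N-1)}\exp(h^{N/(N-1)}/q)$ as required.

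The only delicate ingredient is the calibration of the cutoff $K$: it must sit just below $h^{N/(N-1)}$ by a bounded amount so that simultaneously (i) the initial-segment H\"older estimate undercuts $h$ by exactly the polynomial deficit $h^{-1/(N-1)}$ needed to produce the prefactor, and (ii) the tail factor $e^{-(K+1)/q}$ still carries the full exponential $\exp(-h^{N/(N-1)}/q)$. Everything else -- the two H\"older applications, the geometric sums, and the absorption of small $h$ into the implicit constants -- is then routine.
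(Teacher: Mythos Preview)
The paper does not actually prove this lemma; it simply records it as ``a variant of Lemma~2.2 in \cite{19}'' (Lu--Tang) and moves on. Your argument is correct and is essentially the standard proof found in that reference (and, for $N=2$, in Ibrahim--Masmoudi--Nakanishi): a block-constant trial sequence of length $\lceil h^{N/(N-1)}\rceil$ for the upper bound, and for the lower bound a split at a cutoff $K$ with H\"older on the initial block (against $\|a\|_{N}$) and on the tail (against $\|a\|_{e}$), followed by the choice $K+1\approx h^{N/(N-1)}$ and a first-order expansion to extract the polynomial deficit $h^{-1/(N-1)}$.

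One small point worth making explicit for the regime $h$ close to $1$, where your cutoff $K$ may fail to exist: applying H\"older directly (no cutoff) to $h=\sum a_{k}=\sum (a_{k}e^{k/q})e^{-k/q}$ already gives $\|a\|_{e}\ge h/C_{q}$ for $q>1$ (and $\|a\|_{e}\ge h$ for $q=1$), so $\mu(h)$ is bounded below by a positive constant on all of $(1,\infty)$, while the right-hand side is bounded above on any compact interval. This is what your phrase ``absorption of small $h$ into the implicit constants'' amounts to.
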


Finally, we can infer from \cite{8} that
\begin{lemma}\label{2.5}
Let $N\geq 2$, $0<t<N$. Then the embedding
$$
W_{rad}^{1,N}(\R^{N})\cap L^{q}(\R^{N})\hookrightarrow L^{r}(\R^{N},\lvert x\rvert^{-t})
$$
is compact for all $r\geq q$. Moreover, the embedding
$$
W_{rad}^{1,N}(\R^{N})\cap L^{q}(\R^{N})\hookrightarrow L^{r}(\R^{N})
$$
is continuous for all $r\geq q$ and compact for all $r>q$.
\end{lemma}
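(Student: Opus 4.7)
My plan is to combine a Strauss-type radial decay estimate with a three-region decomposition of $\R^{N}$ and to invoke Rellich-Kondrachov on bounded annuli.

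First I would derive the radial decay. For $u(x)=f(|x|)$ radial in $W^{1,N}_{\mathrm{rad}}\cap L^{q}$, the combination of $f\in L^{q}(\rho^{N-1}\mathrm{d}\rho)$ and the absolute continuity of $f$ on $(0,\infty)$ (coming from the Sobolev regularity on each compact subinterval) forces $f(\rho)\to 0$ at infinity. Writing
$$
|f(r)|^{\alpha}=-\alpha\int_{r}^{\infty}|f|^{\alpha-1}\,\mathrm{sgn}(f)\,f'(\rho)\,\mathrm{d}\rho
$$
for $\alpha:=1+q(N-1)/N$ and applying H\"older's inequality with conjugate pair $(N,N/(N-1))$, redistributing the weight $\rho^{N-1}$ so that the $|f|$-factor matches $\|u\|_{q}^{q}$ and the $f'$-factor matches $\|\nabla u\|_{N}^{N}$, I would arrive at
$$
|u(x)|\leq C\,\|\nabla u\|_{N}^{1/\alpha}\,\|u\|_{q}^{q(N-1)/(N\alpha)}\,|x|^{-\gamma},\qquad |x|\geq 1,
$$
with $\gamma=(N-1)/\alpha>0$ depending only on $N$ and $q$.

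Next, with $\{u_{n}\}$ bounded in the joint norm by $M$ and $u_{n}\rightharpoonup u$, I would split $\R^{N}$ into a tail $\{|x|>R\}$, a middle annulus $\{\varepsilon\leq|x|\leq R\}$, and (when $t>0$) a core $\{|x|<\varepsilon\}$. On the tail, the interpolation $|u_{n}|^{r}\leq|u_{n}|^{q}|u_{n}|^{r-q}$ combined with the decay bound on the $|u_{n}|^{r-q}$ factor gives
$$
\int_{|x|>R}\frac{|u_{n}|^{r}}{|x|^{t}}\,\mathrm{d}x\leq CM^{q}R^{-(r-q)\gamma-t},
$$
which vanishes uniformly in $n$ as $R\to\infty$ whenever $(r-q)\gamma+t>0$; this reproduces exactly the dichotomy of the statement, covering $r\geq q$ when $t>0$ and forcing $r>q$ when $t=0$. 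On the middle annulus the weight $|x|^{-t}$ is bounded, so classical Rellich-Kondrachov produces an $L^{r}$-convergent subsequence. On the core, H\"older's inequality with conjugate exponents $(s,s')$ chosen so that $ts'<N$ makes $\||x|^{-t}\|_{L^{s'}(B_{\varepsilon})}\to 0$ as $\varepsilon\to 0$, while $\|u_{n}\|_{L^{rs}(B_{\varepsilon})}$ stays bounded via the Trudinger-Moser embedding of $W^{1,N}$ into every Orlicz class $\exp(\alpha|\cdot|^{N/(N-1)})-1$. A diagonal extraction over $R\to\infty$ and $\varepsilon\to 0$ then delivers the strong convergence $u_{n}\to u$ in $L^{r}(\R^{N},|x|^{-t})$ (respectively $L^{r}(\R^{N})$).

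The main difficulty is calibrating the exponent $\alpha$ in the decay step so that the H\"older estimate produces exactly the $L^{q}$-norm of $u$ on one side and the $L^{N}$-norm of $\nabla u$ on the other; the choice $\alpha=1+q(N-1)/N$ is forced by this matching, and the positivity of the resulting $\gamma$ is what makes the tail estimate quantitative. The role of the singular weight $|x|^{-t}$ in relaxing $r>q$ to $r\geq q$ becomes transparent from the tail bound, while the delicate core estimate crucially exploits the exponential integrability provided by Trudinger-Moser to absorb arbitrarily large H\"older exponents $s$.
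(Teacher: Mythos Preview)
The paper does not prove Lemma~\ref{2.5}; it is quoted directly from reference~\cite{8} (de Figueiredo, dos Santos, Miyagaki) with no argument supplied. Hence there is no ``paper's own proof'' to compare your proposal against.

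That said, your approach is correct and is the standard route to such radial compactness results, and is essentially the one taken in~\cite{8}. The Strauss-type decay with exponent $\alpha=1+q(N-1)/N$ is indeed forced by matching the two H\"older factors to $\|u\|_{q}$ and $\|\nabla u\|_{N}$; the resulting bound $|u(x)|\lesssim |x|^{-(N-1)/\alpha}$ for $|x|\geq 1$ gives exactly the uniform tail smallness you need, and the dichotomy $(r-q)\gamma+t>0$ reproduces precisely the distinction between $r\geq q$ (when $t>0$) and $r>q$ (when $t=0$). The three-region split together with Rellich--Kondrachov on annuli and the local exponential integrability near the origin then closes the argument. One minor omission: the \emph{continuity} assertion for $t=0$, $r\geq q$ is not addressed explicitly in your sketch, but it follows from the same ingredients (local Trudinger embedding on $B_{1}$ plus the pointwise decay bound on $\{|x|>1\}$ to control $\int_{|x|>1}|u|^{r}\,\mathrm{d}x\leq C\|u\|_{q}^{q}\,(\|\nabla u\|_{N}+\|u\|_{q})^{r-q}$) without any subsequence extraction.
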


\section{Two Lemmas}

In this section, we introduce two lemmas which are extremely important to the following proofs.
\begin{lemma}\label{3.1}
Denote $\gamma=(\frac{\kappa_{N}}{\omega_{N}})^{\frac{1}{N}}$. For every $u\in D^{N,q}(\R^{N})$,
\begin{align}
    &\int_{\R^{N}}F(\nabla u^{\star})^{N}\;\mathrm{d}x=\gamma^{N}\int_{\R^{N}}\lvert\nabla u^{\ast}\rvert^{N}\;\mathrm{d}x,\notag\\
    &\int_{\R^{N}}\frac{\exp(\lambda(1-\frac{\beta}{N})\lvert u^{\star}\rvert^{\frac{N}{N-1}})\lvert u^{\star}\rvert^{p}}{F^{0}(x)^{\beta}}\;\mathrm{d}x
    =\gamma^{\beta}\int_{\R^{N}}\frac{\exp(\lambda(1-\frac{\beta}{N})\lvert u^{\ast}\rvert^{\frac{N}{N-1}})\lvert u^{\ast}\rvert^{p}}{\lvert x\rvert^{\beta}}\;\mathrm{d}x.\notag
\end{align}
\end{lemma}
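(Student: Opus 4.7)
The plan is to reduce both sides of each identity to one-dimensional integrals along the common profile $u^{\sharp}$, and then match them via the single rescaling $s=\gamma r$.

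First, I would introduce the radial profiles $h(r):=u^{\sharp}(\kappa_{N}r^{N})$ and $k(r):=u^{\sharp}(\omega_{N}r^{N})$, so that by the very definitions recalled in Section 2 one has $u^{\star}(x)=h(F^{0}(x))$ and $u^{\ast}(x)=k(|x|)$. Since $\kappa_{N}/\omega_{N}=\gamma^{N}$, substitution gives the key identity $h(r)=k(\gamma r)$: the convex profile is the Schwarz profile rescaled by $\gamma$. This is the single bridge that will transport all computations from the Finsler side to the Euclidean side.

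Second, I would pass the $F$-norm through the chain rule. Writing $\nabla u^{\star}(x)=h'(F^{0}(x))\,\nabla F^{0}(x)$ and exploiting that $F$ is even and positively $1$-homogeneous together with Lemma \ref{2.1}(vi) (which gives $F(\nabla F^{0}(x))=1$), one obtains $F(\nabla u^{\star}(x))=|h'(F^{0}(x))|$, and analogously $|\nabla u^{\ast}(x)|=|k'(|x|)|$. The chain-rule step is legitimate because $u^{\sharp}$ is non-increasing, hence differentiable a.e., which is enough for the integrations that follow.

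Finally, I would apply the Wulff coarea formula. From $|\mathcal{W}_{r}|=\kappa_{N}r^{N}$ (equivalently, from Lemma \ref{2.1}(iv)) one has $\int_{\R^{N}}f(F^{0}(x))\,\mathrm{d}x=N\kappa_{N}\int_{0}^{\infty}f(r)\,r^{N-1}\,\mathrm{d}r$, with the Euclidean analogue $\int_{\R^{N}}f(|x|)\,\mathrm{d}x=N\omega_{N}\int_{0}^{\infty}f(r)\,r^{N-1}\,\mathrm{d}r$. Applying these formulas to $f(r)=|h'(r)|^{N}$ for the first identity, and to $f(r)=G(h(r))\,r^{-\beta}$ with $G(t)=\exp\!\bigl(\lambda(1-\tfrac{\beta}{N})t^{\frac{N}{N-1}}\bigr)t^{p}$ for the second, I would then substitute $s=\gamma r$ and use $h(r)=k(\gamma r)$ (so $h'(r)=\gamma k'(\gamma r)$). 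The prefactor $\kappa_{N}=\gamma^{N}\omega_{N}$ together with the Jacobian $r^{N-1}\mathrm{d}r=\gamma^{-N}s^{N-1}\mathrm{d}s$ and the weight $r^{-\beta}=\gamma^{\beta}s^{-\beta}$ combine to produce exactly $\gamma^{N}$ in the first identity and $\gamma^{\beta}$ in the second. The only thing to watch is the bookkeeping of the $\gamma$-exponents coming from $\kappa_{N}/\omega_{N}$, the substitution Jacobian, and the singular weight $r^{-\beta}$; there is no substantial analytic obstacle beyond the standard a.e.\ regularity of the decreasing rearrangement.
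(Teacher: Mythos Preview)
Your proposal is correct and is essentially the same argument as the paper's: both sides are reduced to one-dimensional radial integrals via the Wulff/Euclidean coarea formulas (Lemma~\ref{2.1}(iv),(vi)), and then matched by the single substitution $s=\gamma r$ (equivalently, $\kappa_{N}r^{N}=\omega_{N}s^{N}$), with the bookkeeping of $\gamma$-powers exactly as you describe. Your introduction of the profiles $h$ and $k$ and the identity $h(r)=k(\gamma r)$ is just a clean repackaging of the paper's line-by-line computation.
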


\begin{proof}
By Lemma \ref{2.1},
\begin{align}
    &\int_{\R^{N}}F(\nabla u^{\star})^{N}\;\mathrm{d}x\notag\\
    =&\int_{0}^{\infty}F(u^{\sharp'}(\kappa_{N}F^{0}(x)^{N})\kappa_{N}NF^{0}(x)^{N-1}\nabla F^{0}(x))^{N}\int_{\partial W_{r}}\frac{1}{\lvert\nabla F^{0}(x)\rvert}\;\mathrm{d}\sigma\mathrm{d}r\notag\\
    =&N\kappa_{N}\int_{0}^{\infty}\lvert u^{\sharp'}(\kappa_{N}r^{N})\kappa_{N}Nr^{N-1}\rvert^{N}r^{N-1}\;\mathrm{d}r\;\;\;(\mathrm{setting}\;r=\gamma^{-1}t)\notag\\
    =&\gamma^{N}N\omega_{N}\int_{0}^{\infty}\lvert u^{\sharp'}(\omega_{N}t^{N})\omega_{N}Nt^{N-1}\rvert^{N}t^{N-1}\;\mathrm{d}t\notag\\
    =&\gamma^{N}\int_{0}^{\infty}\lvert u^{\sharp'}(\omega_{N}t^{N})\omega_{N}Nt^{N-1}\rvert^{N}\int_{\partial B_{t}}1\;\mathrm{d}\sigma\mathrm{d}t\notag\\
    =&\gamma^{N}\int_{\R^{N}}\lvert\nabla u^{\ast}\rvert^{N}\;\mathrm{d}x\notag
\end{align}
and
\begin{align}
    &\int_{\R^{N}}\frac{\exp(\lambda(1-\frac{\beta}{N})\lvert u^{\star}\rvert^{\frac{N}{N-1}})\lvert u^{\star}\rvert^{p}}{F^{0}(x)^{\beta}}\;\mathrm{d}x\notag\\
    =&\int_{0}^{\infty}\frac{\exp(\lambda(1-\frac{\beta}{N})\lvert u^{\sharp}(\kappa_{N}F^{0}(x)^{N})\rvert^{\frac{N}{N-1}})\lvert u^{\sharp}(\kappa_{N}F^{0}(x)^{N})\rvert^{p}}{F^{0}(x)^{\beta}}\int_{\partial W_{r}}\frac{1}{\lvert\nabla F^{0}(x)\rvert}\;\mathrm{d}\sigma\mathrm{d}r\notag\\
    =&N\kappa_{N}\int_{0}^{\infty}\frac{\exp(\lambda(1-\frac{\beta}{N})\lvert u^{\sharp}(\kappa_{N}r^{N})\rvert^{\frac{N}{N-1}})\lvert u^{\sharp}(\kappa_{N}r^{N})\rvert^{p}}{r^{\beta}}r^{N-1}\;\mathrm{d}r\;\;\;(\mathrm{setting}\;r=\gamma^{-1}t)\notag\\
    =&\gamma^{\beta}N\omega_{N}\int_{0}^{\infty}\frac{\exp(\lambda(1-\frac{\beta}{N})\lvert u^{\sharp}(\omega_{N}t^{N})\rvert^{\frac{N}{N-1}})\lvert u^{\sharp}(\omega_{N}t^{N})\rvert^{p}}{t^{\beta}}t^{N-1}\;\mathrm{d}t\notag\\
    =&\gamma^{\beta}\int_{\R^{N}}\frac{\exp(\lambda(1-\frac{\beta}{N})\lvert u^{\ast}\rvert^{\frac{N}{N-1}})\lvert u^{\ast}\rvert^{p}}{\lvert x\rvert^{\beta}}\;\mathrm{d}x.\notag
\end{align}
\end{proof}

\begin{lemma}\label{3.2}
In $D^{N,q}(\R^{N})$, for any nonnegative decreasing and Wulff symmetric function $u(r)$ with $r=F^{0}(x)$. If $u(R)>\frac{1}{N}\left(\frac{K}{\kappa_{N}}\right)^{\frac{1}{N}}$ and $\int_{\R^{N}\setminus W_{R}}F^{N}(\nabla u)\;\mathrm{d}x\leq K$ for some $R,K>0$, then
$$
\frac{\exp(\lambda_{N}K^{\frac{1}{1-N}}u(R)^{\frac{N}{N-1}})}{u(R)^{\frac{q}{N-1}}}R^{N}\leq C(N,q)\frac{\int_{R}^{\infty}\lvert u(r)\rvert^{q}r^{N-1}\;\mathrm{d}r}{K^{\frac{q}{N-1}}}.
$$
In particular, for $F=\lvert\cdot\rvert$, if $u(R)>1$ and $\int_{\R^{N}\setminus W_{R}}\lvert\nabla u\rvert^{N}\;\mathrm{d}x\leq 1$, there is
$$
\frac{\exp(\alpha_{N}u(R)^{\frac{N}{N-1}})}{u(R)^{\frac{q}{N-1}}}R^{N}\leq C(N,q)\int_{R}^{\infty}\lvert u(r)\rvert^{q}r^{N-1}\;\mathrm{d}r.
$$
\end{lemma}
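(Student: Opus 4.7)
The strategy is a Moser-style Hölder estimate followed by a Laplace-type lower bound on a one-dimensional integral. Since $u$ is Wulff-symmetric, writing $u(x)=u(r)$ with $r=F^{0}(x)$ gives $\nabla u=u'(r)\nabla F^{0}(x)$, and Lemma \ref{2.1}(vi) yields $F(\nabla u)=|u'(r)|$. Combined with Lemma \ref{2.1}(iv) via the coarea formula, the hypothesis becomes $N\kappa_{N}\int_{R}^{\infty}|u'(r)|^{N}r^{N-1}\,\mathrm{d}r\leq K$, and the right-hand side of the conclusion equals $N\kappa_{N}\int_{R}^{\infty}u^{q}r^{N-1}\,\mathrm{d}r$ (times a universal factor). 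Setting $A:=(K/(N\kappa_{N}))^{1/N}$, I would apply Hölder's inequality to $u(R)-u(r)=\int_{R}^{r}(-u'(s))\,\mathrm{d}s$ to obtain the pointwise bound
\[
u(r)\geq u(R)-A(\log(r/R))^{(N-1)/N},\qquad r\geq R.
\]

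I would then introduce $s_{0}:=(u(R)/A)^{N/(N-1)}$. The hypothesis $u(R)>\tfrac{1}{N}(K/\kappa_{N})^{1/N}$ is precisely $Ns_{0}>1$, and the identity $\lambda_{N}^{N-1}=N^{N}\kappa_{N}$ gives $Ns_{0}=\lambda_{N}K^{1/(1-N)}u(R)^{N/(N-1)}$, which is exactly the exponent that must appear. Substituting $r=Re^{s_{0}\tau}$, so $\tau\in[0,1]$ corresponds to $r\in[R,Re^{s_{0}}]$ (the range over which the pointwise bound is nonnegative), converts the radial integral into
\[
\int_{R}^{\infty}u(r)^{q}r^{N-1}\,\mathrm{d}r\;\geq\; R^{N}u(R)^{q}s_{0}\int_{0}^{1}\bigl(1-\tau^{(N-1)/N}\bigr)^{q}e^{Ns_{0}\tau}\,\mathrm{d}\tau.
\]

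The function $f(\eta):=1-(1-\eta)^{(N-1)/N}$ is convex on $[0,1]$ (since $f''(\eta)=\tfrac{N-1}{N^{2}}(1-\eta)^{-(N+1)/N}>0$) with $f(0)=0$ and $f'(0)=(N-1)/N$, so its tangent at $0$ gives $f(\eta)\geq\tfrac{N-1}{N}\eta$. Substituting $\eta=1-\tau$ and then $\xi=Ns_{0}\eta$ produces the Laplace-type lower bound
\[
\int_{0}^{1}\bigl(1-\tau^{(N-1)/N}\bigr)^{q}e^{Ns_{0}\tau}\,\mathrm{d}\tau
\;\geq\;\Bigl(\tfrac{N-1}{N}\Bigr)^{q}\frac{e^{Ns_{0}}}{(Ns_{0})^{q+1}}\int_{0}^{Ns_{0}}\xi^{q}e^{-\xi}\,\mathrm{d}\xi.
\]
Because $Ns_{0}>1$, the truncated gamma integral is at least $\int_{0}^{1}\xi^{q}e^{-\xi}\,\mathrm{d}\xi=:c_{q}>0$, yielding a lower bound of the form $C(N,q)\,e^{Ns_{0}}/s_{0}^{q+1}$. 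Unpacking $u(R)^{q}/s_{0}^{q}=A^{qN/(N-1)}u(R)^{-q/(N-1)}=(K/(N\kappa_{N}))^{q/(N-1)}u(R)^{-q/(N-1)}$ and chaining yields
\[
\int_{R}^{\infty}u^{q}r^{N-1}\,\mathrm{d}r\;\geq\; C(N,q)\,R^{N}\,\frac{e^{\lambda_{N}K^{1/(1-N)}u(R)^{N/(N-1)}}K^{q/(N-1)}}{u(R)^{q/(N-1)}},
\]
which is the claimed inequality after rearrangement. The special case $F=|\cdot|$ with $K=1$ specialises directly since $\alpha_{N}=\lambda_{N}$ in the isotropic setting.

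The main obstacle is the Laplace step: capturing the near-maximum contribution at $\tau\to 1^{-}$ with the correct polynomial prefactor $s_{0}^{-q}$. This is exactly where the hypothesis $u(R)>\tfrac{1}{N}(K/\kappa_{N})^{1/N}$, equivalent to $Ns_{0}>1$, is indispensable: it guarantees that $\int_{0}^{Ns_{0}}\xi^{q}e^{-\xi}\,\mathrm{d}\xi$ is bounded below by a constant depending only on $N,q$, so the final constant stays uniform in $u$. A weaker hypothesis would force this constant to degenerate as $Ns_{0}\downarrow 0$.
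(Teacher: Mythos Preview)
Your argument is correct, and it takes a genuinely different route from the paper's. The paper discretizes: it sets $h_{k}=N(\kappa_{N}/K)^{1/N}u(Re^{k/N})$ and $a_{k}=h_{k}-h_{k+1}$, shows via the same H\"older step that $\|a\|_{1}=h_{0}>1$ and $\|a\|_{N}\leq 1$, bounds the weighted $\ell^{q}$-norm $\|a\|_{e}^{q}=\sum_{k}|a_{k}|^{q}e^{k}$ from below by $\int_{R}^{\infty}u^{q}r^{N-1}\,\mathrm{d}r$ (with a separate argument to absorb the boundary term $a_{0}^{q}$), and then invokes the auxiliary sequence Lemma~\ref{2.4}, namely $\mu(h)\sim e^{h^{N/(N-1)}/q}/h^{1/(N-1)}$, to extract the exponential. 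Your approach stays continuous: the pointwise H\"older bound followed by the substitution $r=Re^{s_{0}\tau}$ and the convexity of $\eta\mapsto 1-(1-\eta)^{(N-1)/N}$ produces the Laplace-type estimate directly. The payoff is that your proof is self-contained---it dispenses with Lemma~\ref{2.4} altogether and avoids the separate handling of the first difference $a_{0}$---whereas the paper's discrete version is the traditional device in the Masmoudi--Sani line of work and reduces everything to an off-the-shelf combinatorial lemma that can be reused elsewhere.
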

\begin{proof}
Denote
$$
h_{k}=N\left(\frac{\kappa_{N}}{K}\right)^{\frac{1}{N}}u(Re^{\frac{k}{N}}),\;\;\;a_{k}=h_{k}-h_{k+1}\geq 0,
$$
then
$$
\|a\|_{1}=h_{0}=N\left(\frac{\kappa_{N}}{K}\right)^{\frac{1}{N}}u(R)>1.
$$
By H\"{o}lder's inequality,
\begin{align}
    a_{k}=h_{k}-h_{k+1}&=N\left(\frac{\kappa_{N}}{K}\right)^{\frac{1}{N}}(u(Re^{\frac{k}{N}})-u(Re^{\frac{k+1}{N}}))\notag\\
    &=N\left(\frac{\kappa_{N}}{K}\right)^{\frac{1}{N}}\int_{Re^{\frac{k+1}{N}}}^{Re^{\frac{k}{N}}}u'(r)\;\mathrm{d}r\notag\\
    &\leq N\left(\frac{\kappa_{N}}{K}\right)^{\frac{1}{N}}\left(\int_{Re^{\frac{k}{N}}}^{Re^{\frac{k+1}{N}}}\lvert u'(r)\rvert^{N}r^{N-1}\;\mathrm{d}r\right)^{\frac{1}{N}}\left(\int_{Re^{\frac{k}{N}}}^{Re^{\frac{k+1}{N}}}\frac{1}{r}\;\mathrm{d}r\right)^{\frac{N-1}{N}}\notag\\
    &=(\frac{N\kappa_{N}}{K})^{\frac{1}{N}}\left(\int_{Re^{\frac{k}{N}}}^{Re^{\frac{k+1}{N}}}\lvert u'(r)\rvert^{N}r^{N-1}\;\mathrm{d}r\right)^{\frac{1}{N}}.\notag
\end{align}
After this, by Lemma \ref{2.1},
\begin{align}
    \|a\|_{N}^{N}=\sum\limits_{k=0}^{\infty}\lvert a_{k}\rvert^{N}&\leq\frac{N\kappa_{N}}{K}\int_{R}^{\infty}\lvert u'(r)\rvert^{N}r^{N-1}\;\mathrm{d}r\notag\\
    &=\frac{N\kappa_{N}}{K}\int_{R}^{\infty}F^{N}(u'(r)\nabla F^{0}(x))r^{N-1}\;\mathrm{d}r\notag\\
    &=\frac{1}{K}\int_{R}^{\infty}F^{N}(u'(r)\nabla F^{0}(x))\int_{\partial W_{r}}\frac{1}{\lvert\nabla F^{0}(x)\rvert}\;\mathrm{d}\sigma\mathrm{d}r\notag\\
    &=\frac{1}{K}\int_{R^{N}\setminus W_{R}}F^{N}(\nabla u)\;\mathrm{d}x\leq 1.\notag
\end{align}
On the other hand,
\begin{align}
    \int_{R}^{\infty}\lvert u(r)\rvert^{q}r^{N-1}\;\mathrm{d}r&=\sum\limits_{k=0}^{\infty}\int_{Re^{\frac{k}{N}}}^{Re^{\frac{k+1}{N}}}\lvert u(r)\rvert^{q}r^{N-1}\;\mathrm{d}r\notag\\
    &\geq\sum\limits_{k=0}^{\infty}\lvert u(Re^{\frac{k+1}{N}})\rvert^{q}\int_{Re^{\frac{k}{N}}}^{Re^{\frac{k+1}{N}}}r^{N-1}\;\mathrm{d}r\notag\\
    &=\sum\limits_{k=0}^{\infty}\lvert u(Re^{\frac{k+1}{N}})\rvert^{q}\frac{R^{N}e^{k+1}(1-e^{-1})}{N}\notag\\
    &=\frac{R^{N}(1-e^{-1})}{N}\sum\limits_{k=0}^{\infty}\lvert u(Re^{\frac{k+1}{N}})\rvert^{q}e^{k+1}\notag\\
    &=\frac{R^{N}(1-e^{-1})}{N}\sum\limits_{k=1}^{\infty}\lvert u(Re^{\frac{k}{N}})\rvert^{q}e^{k}\notag\\
    &=\frac{R^{N}(1-e^{-1})}{N^{q+1}}\left(\frac{K}{\kappa_{N}}\right)^{\frac{q}{N}}\sum\limits_{k=1}^{\infty}\lvert h_{k}\rvert^{q}e^{k}\notag\\
    &\geq\frac{R^{N}(1-e^{-1})}{N^{q+1}}\left(\frac{K}{\kappa_{N}}\right)^{\frac{q}{N}}\sum\limits_{k=1}^{\infty}\lvert a_{k}\rvert^{q}e^{k},\notag
\end{align}
which implies
\begin{align}\label{3.1式}
    \|a\|_{e}^{q}=a_{0}^{q}+\sum\limits_{k=1}^{\infty}\lvert a_{k}\rvert^{q}e^{k}\leq h_{0}^{q}+\frac{C(N,q)}{R^{N}K^{\frac{q}{N}}}\int_{R}^{\infty}\lvert u(r)\rvert^{q}r^{N-1}\;\mathrm{d}r.
\end{align}
Now, for $R<s<Re^{\frac{2^{\frac{N}{1-N}}}{N}}:=\widetilde{R}$,
\begin{align}
    h_{0}-N\left(\frac{\kappa_{N}}{K}\right)^{\frac{1}{N}}u(s)&=N\left(\frac{\kappa_{N}}{K}\right)^{\frac{1}{N}}\int_{s}^{R}u'(r)\;\mathrm{d}r\notag\\
    &\leq N\left(\frac{\kappa_{N}}{K}\right)^{\frac{1}{N}}\left(\int_{R}^{s}\lvert u'(r)\rvert^{N}r^{N-1}\;\mathrm{d}r\right)^{\frac{1}{N}}\left(\int_{R}^{s}\frac{1}{r}\;\mathrm{d}r\right)^{\frac{N-1}{N}}\notag\\
    &\leq\frac{1}{2K^{\frac{1}{N}}}\left(\int_{R}^{\infty}F^{N}(u'(r)\nabla F^{0}(x))\int_{\partial W_{r}}\frac{1}{\lvert\nabla F^{0}(x)\rvert}\;\mathrm{d}\sigma\mathrm{d}r\right)^{\frac{1}{N}}\notag\\
    &\leq\frac{1}{2K^{\frac{1}{N}}}\left(\int_{R^{N}\setminus W_{R}}F^{N}(\nabla u)\;\mathrm{d}x\right)^{\frac{1}{N}}\notag\\
    &\leq\frac{1}{2}<\frac{h_{0}}{2}.\notag
\end{align}
Hence, $h_{0}\lesssim K^{-\frac{1}{N}}u(s)$ for $0<s<\widetilde{R}$, and
$$
\int_{R}^{\infty}\lvert u(r)\rvert^{q}r^{N-1}\;\mathrm{d}r\geq\int_{R}^{\widetilde{R}}\lvert u(r)\rvert^{q}r^{N-1}\;\mathrm{d}r\gtrsim R^{N}K^{\frac{q}{N}}h_{0}^{q}.
$$
Furthermore, by \eqref{3.1式} and Lemma \ref{2.4},
\begin{align}
    \int_{R}^{\infty}\lvert u(r)\rvert^{q}r^{N-1}\;\mathrm{d}r&\geq C(N,q)R^{N}K^{\frac{q}{N}}\|a\|_{e}^{q}\notag\\
    &\geq C(N,q)R^{N}K^{\frac{q}{N}}\frac{e^{h_{0}^{\frac{N}{N-1}}}}{h_{0}^{\frac{q}{N-1}}}\notag\\
    &=C(N,q)R^{N}K^{\frac{q}{N-1}}\frac{\exp(\lambda_{N}K^{\frac{1}{1-N}}u(R)^{\frac{N}{N-1}})}{u(R)^{\frac{q}{N-1}}}.\notag
\end{align}
For $F=\lvert\cdot\rvert$, $u(R)>1$ and $\int_{\R^{N}\setminus W_{R}}\lvert\nabla u\rvert^{N}\;\mathrm{d}x\leq 1$, we see
$$
h_{0}=N\omega_{N}^{\frac{1}{N}}>1.
$$
To conclude, we can repeat the previous processes.
\end{proof}

\section{Anisotropic Subcritical Trudinger-Moser Inequality}

In this section, we demonstrate Theorem \ref{1.1}. It illustrates an anisotropic subcritical Trudinger-Moser inequality.
\begin{proof}
It is sufficient to prove Theorem \ref{1.1} for $u\geq 0$. Since $\left(\frac{1}{F^{0}(x)^{\beta}}\right)^{\star}=\frac{1}{F^{0}(x)^{\beta}}$, by Lemma \ref{2.2}, Lemma \ref{2.3} there is
\begin{align}\label{4.1}
    &\sup\limits_{u\in D^{N,q}(\R^{N}),\;\|F(\nabla u)\|_{N}\leq 1}\frac{1}{\|u\|_{q}^{q(1-\frac{\beta}{N})}}\int_{\R^{N}}\frac{\exp(\lambda(1-\frac{\beta}{N})\lvert u\rvert^{\frac{N}{N-1}})\lvert u\rvert^{p}}{F^{0}(x)^{\beta}}\;\mathrm{d}x\notag\\
    \leq&\sup\limits_{u^{\star}\in D^{N,q}(\R^{N}),\;\|F(\nabla u^{\star})\|_{N}\leq 1}\frac{1}{\|u^{\star}\|_{q}^{q(1-\frac{\beta}{N})}}\int_{\R^{N}}\frac{\exp(\lambda(1-\frac{\beta}{N})\lvert u^{\star}\rvert^{\frac{N}{N-1}})\lvert u^{\star}\rvert^{p}}{F^{0}(x)^{\beta}}\;\mathrm{d}x.
\end{align}
Seeing that the opposite inequality of \eqref{4.1} is clear, we conclude
\begin{align}\label{4.2}
    &\sup\limits_{u\in D^{N,q}(\R^{N}),\;\|F(\nabla u)\|_{N}\leq 1}\frac{1}{\|u\|_{q}^{q(1-\frac{\beta}{N})}}\int_{\R^{N}}\frac{\exp(\lambda(1-\frac{\beta}{N})\lvert u\rvert^{\frac{N}{N-1}})\lvert u\rvert^{p}}{F^{0}(x)^{\beta}}\;\mathrm{d}x\notag\\
    =&\sup\limits_{u^{\star}\in D^{N,q}(\R^{N}),\;\|F(\nabla u^{\star})\|_{N}\leq 1}\frac{1}{\|u^{\star}\|_{q}^{q(1-\frac{\beta}{N})}}\int_{\R^{N}}\frac{\exp(\lambda(1-\frac{\beta}{N})\lvert u^{\star}\rvert^{\frac{N}{N-1}})\lvert u^{\star}\rvert^{p}}{F^{0}(x)^{\beta}}\;\mathrm{d}x.
\end{align}
After that, by Lemma \ref{3.1},
\begin{align}\label{4.3}
    &\sup\limits_{u^{\star}\in D^{N,q}(\R^{N}),\;\|F(\nabla u^{\star})\|_{N}\leq 1}\frac{1}{\|u^{\star}\|_{q}^{q(1-\frac{\beta}{N})}}\int_{\R^{N}}\frac{\exp(\lambda(1-\frac{\beta}{N})\lvert u^{\star}\rvert^{\frac{N}{N-1}})\lvert u^{\star}\rvert^{p}}{F^{0}(x)^{\beta}}\;\mathrm{d}x\notag\\
    =&\sup\limits_{u^{\ast}\in D^{N,q}(\R^{N}),\;\gamma\|\nabla u^{\ast})\|_{N}\leq 1}\frac{\gamma^{\beta}}{\|u^{\ast}\|_{q}^{q(1-\frac{\beta}{N})}}\int_{\R^{N}}\frac{\exp(\lambda(1-\frac{\beta}{N})\lvert u^{\ast}\rvert^{\frac{N}{N-1}})\lvert u^{\ast}\rvert^{p}}{\lvert x\rvert^{\beta}}\;\mathrm{d}x\notag\\
    =&\sup\limits_{u^{\ast}\in D^{N,q}(\R^{N}),\;\|\nabla u^{\ast})\|_{N}\leq 1}\frac{\gamma^{\beta+q(1-\frac{\beta}{N})-p}}{\|u^{\ast}\|_{q}^{q(1-\frac{\beta}{N})}}\int_{\R^{N}}\frac{\exp(\lambda(1-\frac{\beta}{N})\gamma^{-\frac{N}{N-1}}\lvert u^{\ast}\rvert^{\frac{N}{N-1}})\lvert u^{\ast}\rvert^{p}}{\lvert x\rvert^{\beta}}\;\mathrm{d}x.
\end{align}
We may suppose that $u\in C_{0}^{\infty}(\R^{N})$ is a nonnegative, radially decreasing function and $\|\nabla u\|_{N}\leq 1$ by using the density argument. Writen as
$$
\int_{\R^{N}}\frac{\exp(\lambda(1-\frac{\beta}{N})\gamma^{-\frac{N}{N-1}}\lvert u\rvert^{\frac{N}{N-1}})\lvert u\rvert^{p}}{\lvert x\rvert^{\beta}}\;\mathrm{d}x=I_{1}+I_{2},
$$
where
\begin{align}
    &I_{1}=\int_{\{u>1\}}\frac{\exp(\lambda(1-\frac{\beta}{N})\gamma^{-\frac{N}{N-1}}\lvert u\rvert^{\frac{N}{N-1}})\lvert u\rvert^{p}}{\lvert x\rvert^{\beta}}\;\mathrm{d}x,\notag\\
    &I_{2}=\int_{\{u\leq 1\}}\frac{\exp(\lambda(1-\frac{\beta}{N})\gamma^{-\frac{N}{N-1}}\lvert u\rvert^{\frac{N}{N-1}})\lvert u\rvert^{p}}{\lvert x\rvert^{\beta}}\;\mathrm{d}x.\notag
\end{align}
First, by Lemma \ref{2.5},
\begin{align}\label{4.4}
    I_{2}\leq e^{\lambda\gamma^{-\frac{N}{N-1}}}\|\lvert x\rvert^{-\frac{\beta}{p}}u\|_{p}^{p}\leq C(N,p,q,\lambda,\beta)\|u\|_{q}^{q(1-\frac{\beta}{N})}.
\end{align}
Next, set
$$
v=u-1\;\;\mathrm{on}\;\;\Omega(u):=\{u>1\},
$$
so that
$$
v\in W_{0}^{1,N}(\Omega(u))\;\;\mathrm{and}\;\;\|\nabla v\|_{N}\leq 1.
$$
By $(1+t^{\frac{N-1}{N}})^{\frac{N}{N-1}}\leq(1+\epsilon)t+C_{\epsilon}$ for any $\epsilon>0$ and $t\geq 0$,
\begin{align}
    I_{1}&=\int_{\Omega(u)}\frac{\exp(\lambda(1-\frac{\beta}{N})\gamma^{-\frac{N}{N-1}}\lvert v+1\rvert^{\frac{N}{N-1}}+p\ln(v+1))}{\lvert x\rvert^{\beta}}\;\mathrm{d}x\notag\\
    &\leq\int_{\Omega(u)}\frac{\exp(\lambda(1-\frac{\beta}{N})\gamma^{-\frac{N}{N-1}}\lvert v+1\rvert^{\frac{N}{N-1}}+pv)}{\lvert x\rvert^{\beta}}\;\mathrm{d}x\notag\\
    &\leq\int_{\Omega(u)}\frac{\exp(\lambda(1-\frac{\beta}{N})\gamma^{-\frac{N}{N-1}}((1+\epsilon)v^{\frac{N}{N-1}}+C_{\epsilon})+C_{\epsilon})}{\lvert x\rvert^{\beta}}\;\mathrm{d}x\notag\\
    &=C(N,p,q,\lambda,\beta)\int_{\Omega(u)}\frac{\exp(\alpha_{N}(1-\frac{\beta}{N})\lvert v\rvert^{\frac{N}{N-1}})}{\lvert x\rvert^{\beta}}\;\mathrm{d}x\notag
\end{align}
for $\epsilon=\frac{\lambda_{N}}{\lambda}-1$ and for $C_{\epsilon}$ sufficiently large. Hence, by the singular Trudinger-Moser inequality in \cite{2},
$$
I_{1}\leq C(N,p,q,\lambda,\beta)\lvert\Omega(u)\rvert^{1-\frac{\beta}{N}}.
$$
Meanwhile, it follows naturally that
$$
\lvert\Omega(u)\rvert=\int_{\Omega(u)}1\;\mathrm{d}x\leq\int_{\Omega(u)}u^{q}\;\mathrm{d}x=\|u\|_{q}^{q}.
$$
So,
\begin{align}\label{4.5}
    I_{1}\leq C(N,p,q,\lambda,\beta)\|u\|_{q}^{q(1-\frac{\beta}{N})}.
\end{align}
By \eqref{4.2}-\eqref{4.5},
$$
\sup\limits_{u\in D^{N,q}(\R^{N}),\;\|F(\nabla u)\|_{N}\leq 1}\frac{1}{\|u\|_{q}^{q(1-\frac{\beta}{N})}}\int_{\R^{N}}\frac{\exp(\lambda(1-\frac{\beta}{N})\lvert u\rvert^{\frac{N}{N-1}})\lvert u\rvert^{p}}{F^{0}(x)^{\beta}}\;\mathrm{d}x\leq C(N,p,q,\lambda,\beta).
$$

Now, we prove that the constant $\lambda_{N}$ is sharp.

Choose $\{u_{n}\}_{n=1}^{\infty}$ as follows:
$$
u_{n}(x)=\left\{\begin{array}{cc}
\left(\frac{1}{N\kappa_{N}}\right)^{\frac{1}{N}}\left(\frac{n}{N-\beta}\right)^{\frac{N-1}{N}}, & 0\leq F^{0}(x)\leq e^{-\frac{n}{N-\beta}}, \\
\left(\frac{N-\beta}{nN\kappa_{N}}\right)^{\frac{1}{N}}\ln\left(\frac{1}{F^{0}(x)}\right), & e^{-\frac{n}{N-\beta}}\leq F^{0}(x)\leq 1, \\
0, & F^{0}(x)\geq 1.
\end{array}\right.
$$
Then
$$
\int_{\R^{N}}F^{N}(\nabla u_{n})\;\mathrm{d}x=1,\;\;\;\|u_{n}\|_{q}=O(\left(\frac{1}{n}\right)^{\frac{1}{N}}).
$$
There is
\begin{align}
    &\int_{\R^{N}}\frac{\exp(\lambda_{N}(1-\frac{\beta}{N})\lvert u_{n}\rvert^{\frac{N}{N-1}})\lvert u_{n}\rvert^{p}}{F^{0}(x)^{\beta}}\;\mathrm{d}x\notag\\
    \geq&\exp\left(\lambda_{N}(1-\frac{\beta}{N})(\frac{1}{N\kappa_{N}})^{\frac{1}{N-1}}(\frac{n}{N-\beta})\right)(\frac{1}{N\kappa_{N}})^{\frac{p}{N}}(\frac{n}{N-\beta})^{\frac{p(N-1)}{N}}\int_{0}^{e^{-\frac{n}{N-\beta}}}N\kappa_{N}r^{N-1-\beta}\;\mathrm{d}r\notag\\
    =&(\frac{1}{N\kappa_{N}})^{\frac{p}{N}-1}(\frac{n}{N-\beta})^{\frac{p(N-1)}{N}}\frac{1}{N-\beta}\notag\\
    \sim&n^{\frac{p(N-1)}{N}},\notag
\end{align}
Hence,
\begin{align}
    \frac{1}{\|u_{n}\|_{q}^{q(1-\frac{\beta}{N})}}\int_{\R^{N}}\frac{\exp(\lambda_{N}(1-\frac{\beta}{N})\lvert u_{n}\rvert^{\frac{N}{N-1}})\lvert u_{n}\rvert^{p}}{F^{0}(x)^{\beta}}\;\mathrm{d}x\rightarrow\infty.\notag
\end{align}
This finishes the proof of Theorem \ref{1.1}.
\end{proof}

\section{Anisotropic Singular Trudinger-Moser Inequality with Exact Growth}

In this section, we demonstrate Theorem \ref{1.4}. It illustrates an anisotropic singular Trudinger-Moser inequality with exact growth.
\begin{proof}
In view of \eqref{4.1}-\eqref{4.3},
\begin{align}\label{5.1}
    &\sup\limits_{u\in D^{N,q}(\R^{N}),\;\|F(\nabla u)\|_{N}\leq 1}\frac{1}{\|u\|_{q}^{q(1-\frac{\beta}{N})}}\int_{\R^{N}}\frac{\Phi_{N,q,\beta}(\lambda_{N}(1-\frac{\beta}{N})\lvert u\rvert^{\frac{N}{N-1}})}{(1+d\lvert u\rvert^{\frac{p}{N-1}(1-\frac{\beta}{N})})F^{0}(x)^{\beta}}\;\mathrm{d}x\notag\\
    =&\sup\limits_{u^{\ast}\in D^{N,q}(\R^{N}),\;\|\nabla u^{\ast}\|_{N}\leq 1}\frac{\gamma^{\beta+q(1-\frac{\beta}{N})}}{\|u^{\ast}\|_{q}^{q(1-\frac{\beta}{N})}}\int_{\R^{N}}\frac{\Phi_{N,q,\beta}(\alpha_{N}(1-\frac{\beta}{N})\lvert u^{\ast}\rvert^{\frac{N}{N-1}})}{(1+d\gamma^{-\frac{p}{N-1}(1-\frac{\beta}{N})}\lvert u^{\ast}\rvert^{\frac{p}{N-1}(1-\frac{\beta}{N})})\lvert x\rvert^{\beta}}\;\mathrm{d}x.
\end{align}
We may suppose that $u\in C_{0}^{\infty}(\R^{N})$ is a nonnegative, radially decreasing function and $\|\nabla u\|_{N}\leq 1$ by using the density argument. Let $R_{1}=R_{1}(u)$ be such that
\begin{align}
    &\int_{B_{R_{1}}}\lvert\nabla u\rvert^{N}\;\mathrm{d}x=N\omega_{N}\int_{0}^{R_{1}}\lvert u_{r}\rvert^{N}r^{N-1}\;\mathrm{d}r\leq 1-\varepsilon_{0},\label{5.2}\\
    &\int_{\R^{N}\setminus B_{R_{1}}}\lvert\nabla u\rvert^{N}\;\mathrm{d}x=N\omega_{N}\int_{R_{1}}^{\infty}\lvert u_{r}\rvert^{N}r^{N-1}\;\mathrm{d}r\leq\varepsilon_{0}.\label{5.3}
\end{align}
Here $\varepsilon_{0}\in(0,1)$ is fixed and does not depend on $u$.

By  H\"{o}lder's inequality,
\begin{align}\label{5.4}
    u(r_{1})-u(r_{2})&=\int_{r_{1}}^{r_{2}}-u_{r}\;\mathrm{d}r\leq\left(\int_{r_{1}}^{r_{2}}\lvert u_{r}\rvert^{N}r^{N-1}\;\mathrm{d}r\right)^{\frac{1}{N}}\left(\ln\frac{r_{2}}{r_{1}}\right)^{\frac{N-1}{N}}\notag\\
    &\leq\left(\frac{1-\varepsilon_{0}}{N\omega_{N}}\right)^{\frac{1}{N}}\left(\ln\frac{r_{2}}{r_{1}}\right)^{\frac{N-1}{N}}\;\;\;\mathrm{for}\;0<r_{1}\leq r_{2}\leq R_{1}
\end{align}
and
\begin{align}\label{5.5}
    u(r_{1})-u(r_{2})\leq\left(\frac{\varepsilon_{0}}{N\omega_{N}}\right)^{\frac{1}{N}}\left(\ln\frac{r_{2}}{r_{1}}\right)^{\frac{N-1}{N}}\;\;\;\mathrm{for}\;R_{1}\leq r_{1}\leq r_{2}.
\end{align}
Define $R_{0}:=\inf\{r>0:\;u(r)\leq1\}\in[0,\infty)$. Hence $u(s)\leq 1$ when $s\geq R_{0}$. Without loss of generelity, we assume $R_{0}>0$.

Writen as
$$
\int_{\R^{N}}\frac{\Phi_{N,q,\beta}(\alpha_{N}(1-\frac{\beta}{N})\lvert u\rvert^{\frac{N}{N-1}})}{(1+d\gamma^{-\frac{p}{N-1}(1-\frac{\beta}{N})}\lvert u\rvert^{\frac{p}{N-1}(1-\frac{\beta}{N})})\lvert x\rvert^{\beta}}\;\mathrm{d}x=I+J,
$$
where
\begin{align}
    &I=\int_{B_{R_{0}}}\frac{\Phi_{N,q,\beta}(\alpha_{N}(1-\frac{\beta}{N})\lvert u\rvert^{\frac{N}{N-1}})}{(1+d\gamma^{-\frac{p}{N-1}(1-\frac{\beta}{N})}\lvert u\rvert^{\frac{p}{N-1}(1-\frac{\beta}{N})})\lvert x\rvert^{\beta}}\;\mathrm{d}x\notag\\
    &J=\int_{\R^{N}\setminus B_{R_{0}}}\frac{\Phi_{N,q,\beta}(\alpha_{N}(1-\frac{\beta}{N})\lvert u\rvert^{\frac{N}{N-1}})}{(1+d\gamma^{-\frac{p}{N-1}(1-\frac{\beta}{N})}\lvert u\rvert^{\frac{p}{N-1}(1-\frac{\beta}{N})})\lvert x\rvert^{\beta}}\;\mathrm{d}x.\notag
\end{align}
First, we estimate $J$. Since $u\leq1$ on $\R^{N}\setminus B_{R_{0}}$, by Lagrange remainder term of Taylor's formula and Lemma \ref{2.5} we see if $\beta>0$,
\begin{align}\label{5.6}
    J\leq C(N,q,\beta)\int_{\R^{N}\setminus B_{R_{0}}}\frac{\lvert u\rvert^{\frac{N}{N-1}(\lfloor q\frac{N-1}{N}(1-\frac{\beta}{N})\rfloor+1)}}{\lvert x\rvert^{\beta}}\;\mathrm{d}x\leq C(N,q,\beta)\|u\|_{q}^{q(1-\frac{\beta}{N})}.
\end{align}
Similar to the situation $\beta=0$,
\begin{align}\label{5.7}
    J\leq C(N,q)\|u\|_{q}^{q(1-\frac{\beta}{N})}.
\end{align}
Therefore, we just need to deal with the integral $I$.

{\bfseries Case 1:} $0<R_{0}\leq R_{1}$. By using \eqref{5.4}, for $0<r\leq R_{0}$,
$$
u(r)\leq 1+\left(\frac{1-\varepsilon_{0}}{N\omega_{N}}\right)^{\frac{1}{N}}\left(\ln\frac{R_{0}}{r}\right)^{\frac{N-1}{N}}.
$$
Since
$$
(a+b)^{t}\leq(1+\epsilon)^{\frac{t-1}{t}}a^{t}+(1-(1+\epsilon)^{-\frac{1}{t}})^{1-t}b^{t}
$$
for any $t\geq 1$, $\epsilon>0$ and $a,b\geq 0$,
$$
u^{\frac{N}{N-1}}(r)\leq(1+\epsilon)^{\frac{1}{N}}\left(\frac{1-\varepsilon_{0}}{N\omega_{N}}\right)^{\frac{1}{N-1}}\ln\frac{R_{0}}{r}+C(N,\epsilon).
$$
After that,
\begin{align}
    I&\leq\int_{B_{R_{0}}}\frac{\exp(\alpha_{N}(1-\frac{\beta}{N})\lvert u\rvert^{\frac{N}{N-1}})}{\lvert x\rvert^{\beta}}\;\mathrm{d}x\notag\\
    &\leq\int_{B_{R_{0}}}\frac{\exp(\alpha_{N}(1-\frac{\beta}{N})(1+\epsilon)^{\frac{1}{N}}\left(\frac{1-\varepsilon_{0}}{N\omega_{N}}\right)^{\frac{1}{N-1}}\ln\frac{R_{0}}{r}+\alpha_{N}(1-\frac{\beta}{N})C(N,\epsilon))}{\lvert x\rvert^{\beta}}\;\mathrm{d}x\notag\\
    &\leq C(N,\epsilon)R_{0}^{\alpha_{N}(1-\frac{\beta}{N})(1+\epsilon)^{\frac{1}{N}}\left(\frac{1-\varepsilon_{0}}{N\omega_{N}}\right)^{\frac{1}{N-1}}}\int_{0}^{R_{0}}r^{N-1-\alpha_{N}(1-\frac{\beta}{N})(1+\epsilon)^{\frac{1}{N}}\left(\frac{1-\varepsilon_{0}}{N\omega_{N}}\right)^{\frac{1}{N-1}}-\beta}\;\mathrm{d}r.\notag
\end{align}
We always can choose $\epsilon>0$ such that
$$
N-\alpha_{N}(1-\frac{\beta}{N})(1+\epsilon)^{\frac{1}{N}}\left(\frac{1-\varepsilon_{0}}{N\omega_{N}}\right)^{\frac{1}{N-1}}-\beta>0.
$$
So,
\begin{align}\label{5.8}
    I&\leq C(N)R_{0}^{\alpha_{N}(1-\frac{\beta}{N})(1+\epsilon)^{\frac{1}{N}}\left(\frac{1-\varepsilon_{0}}{N\omega_{N}}\right)^{\frac{1}{N-1}}}\int_{0}^{R_{0}}r^{N-1-\alpha_{N}(1-\frac{\beta}{N})(1+\epsilon)^{\frac{1}{N}}\left(\frac{1-\varepsilon_{0}}{N\omega_{N}}\right)^{\frac{1}{N-1}}-\beta}\;\mathrm{d}r\notag\\
    &\leq C(N)(N-\alpha_{N}(1-\frac{\beta}{N})(1+\epsilon)^{\frac{1}{N}}\left(\frac{1-\varepsilon_{0}}{N\omega_{N}}\right)^{\frac{1}{N-1}}-\beta)^{-1}R_{0}^{N-\beta}\notag\\
    &=C(N,\beta)\left(\int_{B_{R_{0}}}1\;\mathrm{d}x\right)^{1-\frac{\beta}{N}}\notag\\
    &\leq C(N,\beta)\|u\|_{q}^{q(1-\frac{\beta}{N})}.
\end{align}

{\bfseries Case 2:} $0<R_{1}<R_{0}$. Writen as
$$
I=I_{1}+I_{2},
$$
where
\begin{align}
    &I_{1}=\int_{B_{R_{1}}}\frac{\Phi_{N,q,\beta}(\alpha_{N}(1-\frac{\beta}{N})\lvert u\rvert^{\frac{N}{N-1}})}{(1+d\gamma^{-\frac{p}{N-1}(1-\frac{\beta}{N})}\lvert u\rvert^{\frac{p}{N-1}(1-\frac{\beta}{N})})\lvert x\rvert^{\beta}}\;\mathrm{d}x,\notag\\
    &I_{2}=\int_{B_{R_{0}}\setminus B_{R_{1}}}\frac{\Phi_{N,q,\beta}(\alpha_{N}(1-\frac{\beta}{N})\lvert u\rvert^{\frac{N}{N-1}})}{(1+d\gamma^{-\frac{p}{N-1}(1-\frac{\beta}{N})}\lvert u\rvert^{\frac{p}{N-1}(1-\frac{\beta}{N})})\lvert x\rvert^{\beta}}\;\mathrm{d}x.\notag
\end{align}
By using \eqref{5.5}, for $R_{1}\leq r\leq R_{0}$,
$$
u(r)\leq 1+\left(\frac{\varepsilon_{0}}{N\omega_{N}}\right)^{\frac{1}{N}}\left(\ln\frac{R_{0}}{r}\right)^{\frac{N-1}{N}}.
$$
Hence,
$$
u^{\frac{N}{N-1}}(r)\leq(1+\epsilon)^{\frac{1}{N}}\left(\frac{\varepsilon_{0}}{N\omega_{N}}\right)^{\frac{1}{N-1}}\ln\frac{R_{0}}{r}+C(N,\epsilon).
$$
We always can choose $\epsilon>0$ such that
$$
N-\alpha_{N}(1-\frac{\beta}{N})(1+\epsilon)^{\frac{1}{N}}\left(\frac{\varepsilon_{0}}{N\omega_{N}}\right)^{\frac{1}{N-1}}-\beta>0.
$$
So,
\begin{align}
    I_{2}\leq&\int_{B_{R_{0}}\setminus B_{R_{1}}}\frac{\exp(\alpha_{N}(1-\frac{\beta}{N})\lvert u\rvert^{\frac{N}{N-1}})}{\lvert x\rvert^{\beta}}\;\mathrm{d}x\notag\\
    \leq&\int_{B_{R_{0}}\setminus B_{R_{1}}}\frac{\exp(\alpha_{N}(1-\frac{\beta}{N})(1+\epsilon)^{\frac{1}{N}}\left(\frac{\varepsilon_{0}}{N\omega_{N}}\right)^{\frac{1}{N-1}}\ln\frac{R_{0}}{r}+\alpha_{N}(1-\frac{\beta}{N})C(N,\epsilon))}{\lvert x\rvert^{\beta}}\;\mathrm{d}x\notag\\
    \leq& C(N,\epsilon)R_{0}^{\alpha_{N}(1-\frac{\beta}{N})(1+\epsilon)^{\frac{1}{N}}\left(\frac{\varepsilon_{0}}{N\omega_{N}}\right)^{\frac{1}{N-1}}}\int_{R_{1}}^{R_{0}}r^{N-1-\alpha_{N}(1-\frac{\beta}{N})(1+\epsilon)^{\frac{1}{N}}\left(\frac{\varepsilon_{0}}{N\omega_{N}}\right)^{\frac{1}{N-1}}-\beta}\;\mathrm{d}r\notag\\
    \leq& C(N)R_{0}^{\alpha_{N}(1-\frac{\beta}{N})(1+\epsilon)^{\frac{1}{N}}\left(\frac{\varepsilon_{0}}{N\omega_{N}}\right)^{\frac{1}{N-1}}}\notag\\
    &\cdot\frac{R_{0}^{N-\alpha_{N}(1-\frac{\beta}{N})(1+\epsilon)^{\frac{1}{N}}\left(\frac{\varepsilon_{0}}{N\omega_{N}}\right)^{\frac{1}{N-1}}-\beta}-R_{1}^{N-\alpha_{N}(1-\frac{\beta}{N})(1+\epsilon)^{\frac{1}{N}}\left(\frac{\varepsilon_{0}}{N\omega_{N}}\right)^{\frac{1}{N-1}}-\beta}}{N-\alpha_{N}(1-\frac{\beta}{N})(1+\epsilon)^{\frac{1}{N}}\left(\frac{\varepsilon_{0}}{N\omega_{N}}\right)^{\frac{1}{N-1}}-\beta}\notag\\
    \leq& \frac{C(N)}{N-\alpha_{N}(1-\frac{\beta}{N})(1+\epsilon)^{\frac{1}{N}}\left(\frac{\varepsilon_{0}}{N\omega_{N}}\right)^{\frac{1}{N-1}}-\beta}(R_{0}^{N-\beta}-R_{1}^{N-\beta}).\notag
\end{align}
Since $(\sum\limits_{i}a_{i})^{t}\leq\sum\limits_{i}a_{i}^{t}$ for any $0\leq t\leq 1$ and $a_{i}>0$,
\begin{align}\label{5.9}
    I_{2}&\leq C(N,\beta)(R_{0}^{N}-R_{1}^{N})^{1-\frac{\beta}{N}}\notag\\
    &=C(N,\beta)\left(\int_{B_{R_{0}}\setminus B_{R_{1}}}1\;\mathrm{d}x\right)^{1-\frac{\beta}{N}}\notag\\
    &\leq C(N,\beta)\|u\|_{q}^{q(1-\frac{\beta}{N})}.
\end{align}
Now we only need to estimate
$$
I_{1}=\int_{B_{R_{1}}}\frac{\Phi_{N,q,\beta}(\alpha_{N}(1-\frac{\beta}{N})\lvert u\rvert^{\frac{N}{N-1}})}{(1+d\gamma^{-\frac{p}{N-1}(1-\frac{\beta}{N})}\lvert u\rvert^{\frac{p}{N-1}(1-\frac{\beta}{N})})\lvert x\rvert^{\beta}}\;\mathrm{d}x
$$
with $u(R_{1})>1$. Define
$$
v(r)=u(r)-u(R_{1})\;\;\;\mathrm{on}\;0\leq r\leq R_{1}.
$$
It is clear that $v\in W_{0}^{1,N}(B_{R_{1}})$ and $\int_{B_{R_{1}}}\lvert\nabla v\rvert^{N}\;\mathrm{d}x\leq 1-\varepsilon_{0}$. Moreover, for $0\leq r\leq R_{1}$,
$$
u^{\frac{N}{N-1}}(r)\leq(1+\epsilon)^{\frac{1}{N}}v^{\frac{N}{N-1}}(r)+C(N,\epsilon)u^{\frac{N}{N-1}}(R_{1}).
$$
So,
\begin{align}
    I_{1}\leq&\int_{B_{R_{1}}}\frac{\exp(\alpha_{N}(1-\frac{\beta}{N})\lvert u\rvert^{\frac{N}{N-1}})}{d\gamma^{-\frac{p}{N-1}(1-\frac{\beta}{N})}\lvert u\rvert^{\frac{p}{N-1}(1-\frac{\beta}{N})}\lvert x\rvert^{\beta}}\;\mathrm{d}x\notag\\
    \leq&\frac{1}{d\gamma^{-\frac{p}{N-1}(1-\frac{\beta}{N})}}\frac{\exp(\alpha_{N}(1-\frac{\beta}{N})C(N,\epsilon)u^{\frac{N}{N-1}}(R_{1}))}{\lvert u(R_{1})\rvert^{\frac{p}{N-1}(1-\frac{\beta}{N})
    }}\notag\cdot\int_{B_{R_{1}}}\frac{\exp(\alpha_{N}(1-\frac{\beta}{N})(1+\epsilon)^{\frac{1}{N}}v^{\frac{N}{N-1}}(r))}{\lvert x\rvert^{\beta}}\;\mathrm{d}x\notag\\
    \leq&\frac{1}{d\gamma^{-\frac{p}{N-1}(1-\frac{\beta}{N})}}\frac{\exp(\alpha_{N}(1-\frac{\beta}{N})C(N,\epsilon)u^{\frac{N}{N-1}}(R_{1}))}{\lvert u(R_{1})\rvert^{\frac{p}{N-1}(1-\frac{\beta}{N})
    }}\int_{B_{R_{1}}}\frac{\exp(\alpha_{N}(1-\frac{\beta}{N})w^{\frac{N}{N-1}}(r))}{\lvert x\rvert^{\beta}}\;\mathrm{d}x,\notag
\end{align}
where $w=(1+\epsilon)^{\frac{N-1}{N^{2}}}v$. Here $w\in W_{0}^{1,N}(B_{R_{1}})$ and
$$
\int_{B_{R_{1}}}\lvert\nabla w\rvert^{N}\;\mathrm{d}x=(1+\epsilon)^{\frac{N-1}{N}}\int_{B_{R_{1}}}\lvert\nabla v\rvert^{N}\;\mathrm{d}x\leq(1+\epsilon)^{\frac{N-1}{N}}(1-\varepsilon_{0})\leq 1,
$$
if we choose $0<\epsilon\leq(\frac{1}{1-\varepsilon_{0}})^{\frac{N}{N-1}}-1$. Hence, by the singular Trudinger-Moser inequality in \cite{2},
$$
\int_{B_{R_{1}}}\frac{\exp(\alpha_{N}(1-\frac{\beta}{N})w^{\frac{N}{N-1}}(r))}{\lvert x\rvert^{\beta}}\;\mathrm{d}x\leq C(N,\beta)\lvert B_{R_{1}}\rvert^{1-\frac{\beta}{N}}\leq C(N,\beta)R_{1}^{N-\beta}.
$$
Notice that $C(N,\epsilon)=(1-(1+\epsilon)^{-\frac{N-1}{N}})^{-\frac{1}{N-1}}$, and
\begin{align}
    &\varepsilon_{0}\leq1-(1+\epsilon)^{-\frac{N-1}{N}},\notag\\
    &h_{0}=N\left(\frac{\omega_{N}}{1-(1+\epsilon)^{-\frac{N-1}{N}}}\right)^{\frac{1}{N}}>1.\notag
\end{align}
By Lemma \ref{3.2},
\begin{align}
    \frac{\exp(\alpha_{N}(1-\frac{\beta}{N})C(N,\epsilon)u^{\frac{N}{N-1}}(R_{1}))}{\lvert u(R_{1})\rvert^{\frac{p}{N-1}(1-\frac{\beta}{N})
    }}R_{1}^{N-\beta}&=\left[\frac{\exp(\alpha_{N}C(N,\epsilon)u^{\frac{N}{N-1}}(R_{1}))}{\lvert u(R_{1})\rvert^{\frac{p}{N-1}
    }}R_{1}^{N}\right]^{1-\frac{\beta}{N}}\notag\\
    &\leq\left[\frac{\exp(\alpha_{N}C(N,\epsilon)u^{\frac{N}{N-1}}(R_{1}))}{\lvert u(R_{1})\rvert^{\frac{q}{N-1}
    }}R_{1}^{N}\right]^{1-\frac{\beta}{N}}.\notag\\
    &\leq\left[C(N,q)C(N,\epsilon)^{q}\int_{R_{1}}^{\infty}\lvert u(r)\rvert^{q}r^{N-1}\;\mathrm{d}r\right]^{1-\frac{\beta}{N}}\notag\\
    &\leq C(N,q,\beta)\|u\|_{q}^{q(1-\frac{\beta}{N})}.\notag
\end{align}
Therefore,
\begin{align}\label{5.10}
    I_{1}\leq C(N,p,q,d,\beta)\|u\|_{q}^{q(1-\frac{\beta}{N})}.
\end{align}
We draw our conclusion from \eqref{5.1} and \eqref{5.6}-\eqref{5.10}.

Now, by using $\{u_{n}\}_{n=1}^{\infty}$ in the proof of Theorem \ref{1.1}, we prove that the inequality fails if $p<q$ or the constant $\lambda_{N}$ is replaced by any $\lambda>\lambda_{N}$.

First, we establish the necessity of $p\geq q$. There is
\begin{align}
    &\frac{1}{\|u_{n}\|_{q}^{q(1-\frac{\beta}{N})}}\int_{\R^{N}}\frac{\Phi_{N,q,\beta}(\lambda_{N}(1-\frac{\beta}{N})u_{n}^{\frac{N}{N-1}})}{(1+d\lvert u_{n}\rvert^{\frac{p}{N-1}(1-\frac{\beta}{N})})F^{0}(x)^{\beta}}\;\mathrm{d}x\notag\\
    \geq&\frac{1}{\|u_{n}\|_{q}^{q(1-\frac{\beta}{N})}}\frac{\Phi_{N,q,\beta}(\lambda_{N}(1-\frac{\beta}{N})(\frac{1}{N\kappa_{N}})^{\frac{1}{N-1}}(\frac{n}{N-\beta}))}{1+d(\frac{1}{N\kappa_{N}})^{\frac{p}{N(N-1)}(1-\frac{\beta}{N})}(\frac{n}{N-\beta})^{\frac{p}{N}(1-\frac{\beta}{N})}}\int_{0}^{e^{-\frac{n}{N-\beta}}}N\kappa_{N}r^{N-1-\beta}\;\mathrm{d}r\notag\\
    =&\frac{1}{\|u_{n}\|_{q}^{q(1-\frac{\beta}{N})}}\frac{N\kappa_{N}\Phi_{N,q,\beta}(n)}{1+d(\frac{1}{N\kappa_{N}})^{\frac{p}{N(N-1)}(1-\frac{\beta}{N})}(\frac{n}{N-\beta})^{\frac{p}{N}(1-\frac{\beta}{N})}}\frac{e^{-n}}{N-\beta}\notag\\
    \sim&n^{\frac{q-p}{N}(1-\frac{\beta}{N})}.\notag
\end{align}
Hence, if $p<q$,
$$
\frac{1}{\|u_{n}\|_{q}^{q(1-\frac{\beta}{N})}}\int_{\R^{N}}\frac{\Phi_{N,q,\beta}(\lambda_{N}(1-\frac{\beta}{N})u_{n}^{\frac{N}{N-1}})}{(1+d\lvert u_{n}\rvert^{\frac{p}{N-1}(1-\frac{\beta}{N})})F^{0}(x)^{\beta}}\;\mathrm{d}x\rightarrow\infty.
$$
Next, for $\lambda>\lambda_{N}$,
\begin{align}
    &\int_{\R^{N}}\frac{\Phi_{N,q,\beta}(\lambda(1-\frac{\beta}{N})u_{n}^{\frac{N}{N-1}})}{(1+d\lvert u_{n}\rvert^{\frac{p}{N-1}(1-\frac{\beta}{N})})F^{0}(x)^{\beta}}\;\mathrm{d}x\notag\\
    \geq&\frac{\Phi_{N,q,\beta}(\lambda(1-\frac{\beta}{N})(\frac{1}{N\kappa_{N}})^{\frac{1}{N-1}}(\frac{n}{N-\beta}))}{1+d(\frac{1}{N\kappa_{N}})^{\frac{p}{N(N-1)}(1-\frac{\beta}{N})}(\frac{n}{N-\beta})^{\frac{p}{N}(1-\frac{\beta}{N})}}\int_{0}^{e^{-\frac{n}{N-\beta}}}N\kappa_{N}r^{N-1-\beta}\;\mathrm{d}r\notag\\
    =&\frac{N\kappa_{N}\Phi_{N,q,\beta}(\frac{\lambda}{\lambda_{N}}n)}{1+d(\frac{1}{N\kappa_{N}})^{\frac{p}{N(N-1)}(1-\frac{\beta}{N})}(\frac{n}{N-\beta})^{\frac{p}{N}(1-\frac{\beta}{N})}}\frac{e^{-n}}{N-\beta}\notag\\
    \sim&\frac{e^{(\frac{\lambda}{\lambda_{N}}-1)n}}{n^{\frac{p}{N}(1-\frac{\beta}{N})}}.\notag
\end{align}
So,
\begin{align}
    \frac{1}{\|u_{n}\|_{q}^{q(1-\frac{\beta}{N})}}\int_{\R^{N}}\frac{\Phi_{N,q,\beta}(\lambda(1-\frac{\beta}{N})u_{n}^{\frac{N}{N-1}})}{(1+d\lvert u_{n}\rvert^{\frac{p}{N-1}(1-\frac{\beta}{N})})F^{0}(x)^{\beta}}\;\mathrm{d}x\rightarrow\infty.\notag
\end{align}
This finishes the proof of Theorem \ref{1.4}.
\end{proof}

\section{Proofs of Theorem \ref{1.5} and \ref{1.6}}

We will first show that Theorem \ref{1.5} is true if and only if Theorem \ref{1.6} is true, and then we will demonstrate Theorem \ref{1.6}.

\begin{proof}
Suppose first that Theorem \ref{1.6} is true. Let $u\in D^{N,q}(\R^{N})$, $\|F(\nabla u)\|_{N}^{a}+\|u\|_{q}^{kN}\leq 1$. Set
$$
v(x)=u(\lambda x),\;\;\;\mathrm{where}\;\lambda=\frac{1}{\|u\|_{q}^{(k-1)\frac{q}{N}}}.
$$
Then
$$
\|F(\nabla v)\|_{N}=\|F(\nabla u)\|_{N},\;\|v\|_{q}=\frac{1}{\lambda^{\frac{N}{q}}}\|u\|_{q}
$$
and
$$
\|F(\nabla v)\|_{N}^{a}+\|v\|_{q}^{N}=\|F(\nabla u)\|_{N}^{a}+\frac{1}{\lambda^{\frac{N^{2}}{q}
}}\|u\|_{q}^{N}\leq 1.
$$
By Theorem \ref{1.6},
\begin{align}
    \int_{\R^{N}}\frac{\Phi_{N,q,\beta}(\lambda_{N}(1-\frac{\beta}{N})u^{\frac{N}{N-1}})}{(1+d\lvert u\rvert^{\frac{p}{N-1}(1-\frac{1}{k})(1-\frac{\beta}{N})})F^{0}(x)^{\beta}}\;\mathrm{d}x&=\lambda^{N-\beta}\int_{\R^{N}}\frac{\Phi_{N,q,\beta}(\lambda_{N}(1-\frac{\beta}{N})v^{\frac{N}{N-1}})}{(1+d\lvert v\rvert^{\frac{p}{N-1}(1-\frac{1}{k})(1-\frac{\beta}{N})})F^{0}(x)^{\beta}}\;\mathrm{d}x\notag\\
    &\leq C\lambda^{N-\beta}\|v\|_{q}^{q(1-\frac{1}{k})(1-\frac{\beta}{N})}\notag\\
    &=C\lambda^{\frac{N-\beta}{k}}\|u\|_{q}^{q(1-\frac{1}{k})(1-\frac{\beta}{N})}\notag\\
    &=C.\notag
\end{align}

Suppose now that Theorem \ref{1.5} is true. Let $v\in D^{N,q}(\R^{N})$, $\|F(\nabla v)\|_{N}^{a}+\|v\|_{q}^{N}\leq 1$. Set
$$
u(x)=v(\lambda x),\;\;\;\mathrm{where}\;\lambda=\|v\|_{q}^{(1-\frac{1}{k})\frac{q}{N}}.
$$
Then
$$
\|F(\nabla u)\|_{N}=\|F(\nabla v)\|_{N},\;\|u\|_{q}=\frac{1}{\lambda^{\frac{N}{q}}}\|v\|_{q}
$$
and
$$
\|F(\nabla u)\|_{N}^{a}+\|u\|_{q}^{kN}=\|F(\nabla v)\|_{N}^{a}+\frac{1}{\lambda^{\frac{kN^{2}}{q}
}}\|v\|_{q}^{kN}\leq 1.
$$
By Theorem \ref{1.5},
\begin{align}
    \int_{\R^{N}}\frac{\Phi_{N,q,\beta}(\lambda_{N}(1-\frac{\beta}{N})v^{\frac{N}{N-1}})}{(1+d\lvert v\rvert^{\frac{p}{N-1}(1-\frac{1}{k})(1-\frac{\beta}{N})})F^{0}(x)^{\beta}}\;\mathrm{d}x&=\lambda^{N-\beta}\int_{\R^{N}}\frac{\Phi_{N,q,\beta}(\lambda_{N}(1-\frac{\beta}{N})u^{\frac{N}{N-1}})}{(1+d\lvert u\rvert^{\frac{p}{N-1}(1-\frac{1}{k})(1-\frac{\beta}{N})})F^{0}(x)^{\beta}}\;\mathrm{d}x\notag\\
    &\leq C\lambda^{N-\beta}\notag\\
    &=C\|v\|_{q}^{q(1-\frac{1}{k})(1-\frac{\beta}{N})}.
\end{align}

Now, we provide a proof for Theorem \ref{1.6}. Let $u\in D^{N,q}(\R^{N})$, $\|F(\nabla u)\|_{N}^{a}+\|u\|_{q}^{N}\leq 1$. By H\"{o}lder's inequality, Theorem \ref{1.3} and Theorem \ref{1.4},
\begin{align}
    &\int_{\{u>1\}}\frac{\Phi_{N,q,\beta}(\lambda_{N}(1-\frac{\beta}{N})u^{\frac{N}{N-1}})}{(1+d\lvert u\rvert^{\frac{p}{N-1}(1-\frac{1}{k})(1-\frac{\beta}{N})})F^{0}(x)^{\beta}}\;\mathrm{d}x\notag\\
    \leq&\int_{\{u>1\}}\left(\frac{\Phi_{N,q,\beta}(\lambda_{N}(1-\frac{\beta}{N})u^{\frac{N}{N-1}})}{F^{0}(x)^{\beta}}\right)^{\frac{1}{k}}\left(\frac{\Phi_{N,q,\beta}(\lambda_{N}(1-\frac{\beta}{N})u^{\frac{N}{N-1}})}{d^{\frac{k}{k-1}}\lvert u\rvert^{\frac{p}{N-1}(1-\frac{\beta}{N})}F^{0}(x)^{\beta}}\right)^{1-\frac{1}{k}}\;\mathrm{d}x\notag\\
    \leq&\left[\int_{\{u>1\}}\frac{\Phi_{N,q,\beta}(\lambda_{N}(1-\frac{\beta}{N})u^{\frac{N}{N-1}})}{F^{0}(x)^{\beta}}\;\mathrm{d}x\right]^{\frac{1}{k}}\frac{1}{d}\left[\int_{\{u>1\}}\frac{\Phi_{N,q,\beta}(\lambda_{N}(1-\frac{\beta}{N})u^{\frac{N}{N-1}})}{\lvert u\rvert^{\frac{p}{N-1}(1-\frac{\beta}{N})}F^{0}(x)^{\beta}}\;\mathrm{d}x\right]^{1-\frac{1}{k}}\notag\\
    \leq& C(N,p,q,\beta,a,d,k)\|u\|_{q}^{q(1-\frac{\beta}{N})}\notag\\
    \leq&C(N,p,q,\beta,a,d,k)\|u\|_{q}^{q(1-\frac{1}{k})(1-\frac{\beta}{N})}.\notag
\end{align}
Similar to \eqref{5.1} and \eqref{5.6},
\begin{align}
    \int_{\{u\leq 1\}}\frac{\Phi_{N,q,\beta}(\lambda_{N}(1-\frac{\beta}{N})u^{\frac{N}{N-1}})}{(1+d\lvert u\rvert^{\frac{p}{N-1}(1-\frac{1}{k})(1-\frac{\beta}{N})})F^{0}(x)^{\beta}}\;\mathrm{d}x&\leq C(N,p,q,\beta,a,d,k)\|u\|_{q}^{q(1-\frac{\beta}{N})}\notag\\
    &\leq C(N,p,q,\beta,a,d,k)\|u\|_{q}^{q(1-\frac{1}{k})(1-\frac{\beta}{N})}.\notag
\end{align}
So,
$$
\int_{\R^{N}}\frac{\Phi_{N,q,\beta}(\lambda_{N}(1-\frac{\beta}{N})u^{\frac{N}{N-1}})}{(1+d\lvert u\rvert^{\frac{p}{N-1}(1-\frac{1}{k})(1-\frac{\beta}{N})})F^{0}(x)^{\beta}}\;\mathrm{d}x\leq C(N,p,q,\beta,a,d,k)\|u\|_{q}^{q(1-\frac{1}{k})(1-\frac{\beta}{N})}.
$$

Next, by using $\{u_{n}\}_{n=1}^{\infty}$ in the proof of Theorem \ref{1.1}, we prove that the inequality fails if $p<q$ or the constant $\lambda_{N}$ is replaced by any $\lambda>\lambda_{N}$.

Suppose that $p\geq 1$ is such that
\begin{align}
B=&\sup\limits_{\|F(\nabla u)\|_{N}^{a}+\|u\|_{q}^{N}\leq 1}\frac{1}{\|u\|_{q}^{q(1-\frac{1}{k})(1-\frac{\beta}{N})}}\notag\\
&\int_{\R^{N}}\frac{\Phi_{N,q,\beta}(\lambda_{N}(1-\frac{\beta}{N})u^{\frac{N}{N-1}})}{(1+d[\lambda_{N}(1-\frac{\beta}{N})]^{\frac{p}{N}(1-\frac{1}{k})(1-\frac{\beta}{N})}\lvert u\rvert^{\frac{p}{N-1}(1-\frac{1}{k})(1-\frac{\beta}{N})})F^{0}(x)^{\beta}}\;\mathrm{d}x\notag
\end{align}
is finite. Define
\begin{align}
    A_{\lambda}&=\sup\limits_{\|F(\nabla u)\|_{N}\leq 1}\frac{1}{\|u\|_{q}^{q(1-\frac{\beta}{N})}}\int_{\R^{N}}\frac{\Phi_{N,q,\beta}(\lambda(1-\frac{\beta}{N})\lvert u\rvert^{\frac{N}{N-1}})}{(1+d[\lambda(1-\frac{\beta}{N})]^{\frac{p}{N}(1-\frac{1}{k})(1-\frac{\beta}{N})}\lvert u\rvert^{\frac{p}{N-1}(1-\frac{1}{k})(1-\frac{\beta}{N})})F^{0}(x)^{\beta}}\;\mathrm{d}x\notag\\
    &=\sup\limits_{\|F(\nabla u)\|_{N}\leq 1,\;\|u\|_{q}=1}\int_{\R^{N}}\frac{\Phi_{N,q,\beta}(\lambda(1-\frac{\beta}{N})\lvert u\rvert^{\frac{N}{N-1}})}{(1+d[\lambda(1-\frac{\beta}{N})]^{\frac{p}{N}(1-\frac{1}{k})(1-\frac{\beta}{N})}\lvert u\rvert^{\frac{p}{N-1}(1-\frac{1}{k})(1-\frac{\beta}{N})})F^{0}(x)^{\beta}}\;\mathrm{d}x.\notag
\end{align}
Let $u\in D^{N,q}(\R^{N})$, $\|F(\nabla u)\|_{N}\leq 1$, $\|u\|_{q}=1$. Set
$$
v(x)=\left(\frac{\lambda}{\lambda_{N}}\right)^{\frac{N-1}{N}}u(tx),\;\;\;\mathrm{where}\;t=\left(\frac{(\frac{\lambda}{\lambda_{N}})^{N-1}}{1-(\frac{\lambda}{\lambda_{N}})^{a\frac{N-1}{N}}}\right)^{\frac{q}{N^{2}}}.
$$
Then
$$
\|F(\nabla v)\|_{N}^{a}+\|v\|_{q}^{N}\leq 1.
$$
By the definition of $B$,
\begin{align}
    &\int_{\R^{N}}\frac{\Phi_{N,q,\beta}(\lambda(1-\frac{\beta}{N})\lvert u\rvert^{\frac{N}{N-1}})}{(1+d[\lambda(1-\frac{\beta}{N})]^{\frac{p}{N}(1-\frac{1}{k})(1-\frac{\beta}{N})}\lvert u\rvert^{\frac{p}{N-1}(1-\frac{1}{k})(1-\frac{\beta}{N})})F^{0}(x)^{\beta}}\;\mathrm{d}x\notag\\
    =&\int_{\R^{N}}\frac{\Phi_{N,q,\beta}(\lambda(1-\frac{\beta}{N})\lvert u(tx)\rvert^{\frac{N}{N-1}})}{(1+d[\lambda(1-\frac{\beta}{N})]^{\frac{p}{N}(1-\frac{1}{k})(1-\frac{\beta}{N})}\lvert u(tx)\rvert^{\frac{p}{N-1}(1-\frac{1}{k})(1-\frac{\beta}{N})})F^{0}(tx)^{\beta}}\;\mathrm{d}(tx)\notag\\
    =&t^{N-\beta}\int_{\R^{N}}\frac{\Phi_{N,q,\beta}(\lambda_{N}(1-\frac{\beta}{N})\lvert v\rvert^{\frac{N}{N-1}})}{(1+d[\lambda_{N}(1-\frac{\beta}{N})]^{\frac{p}{N}(1-\frac{1}{k})(1-\frac{\beta}{N})}\lvert v\rvert^{\frac{p}{N-1}(1-\frac{1}{k})(1-\frac{\beta}{N})})F^{0}(x)^{\beta}}\;\mathrm{d}x\notag\\
    \leq&\left(\frac{\lambda}{\lambda_{N}}\right)^{\frac{N-1}{N}q(1-\frac{1}{k})(1-\frac{\beta}{N})}\left(\frac{(\frac{\lambda}{\lambda_{N}})^{N-1}}{1-(\frac{\lambda}{\lambda_{N}})^{a\frac{N-1}{N}}}\right)^{\frac{q}{N}\frac{1}{k}(1-\frac{\beta}{N})}B.\notag
\end{align}
As a consequence,
\begin{align}\label{6.2}
   \limsup\limits_{\lambda\uparrow\lambda_{N}}A_{\lambda}\left(1-\frac{\lambda}{\lambda_{N}}\right)^{\frac{q}{N}\frac{1}{k}(1-\frac{\beta}{N})}<\infty.
\end{align}
Meanwhile,
\begin{align}
    &\int_{\R^{N}}\frac{\Phi_{N,q,\beta}(\lambda(1-\frac{\beta}{N})\lvert u_{n}\rvert^{\frac{N}{N-1}})}{(1+d[\lambda(1-\frac{\beta}{N})]^{\frac{p}{N}(1-\frac{1}{k})(1-\frac{\beta}{N})}\lvert u_{n}\rvert^{\frac{p}{N-1}(1-\frac{1}{k})(1-\frac{\beta}{N})})F^{0}(x)^{\beta}}\;\mathrm{d}x\notag\\
    \geq&\frac{N\kappa_{N}\Phi_{N,q,\beta}(\lambda(1-\frac{\beta}{N})(\frac{1}{N\kappa_{N}})^{\frac{1}{N-1}}(\frac{n}{N-\beta}))}{(1+d[\lambda(1-\frac{\beta}{N})]^{\frac{p}{N}(1-\frac{1}{k})(1-\frac{\beta}{N})}\lvert (\frac{1}{N\kappa_{N}})^{\frac{1}{N}}(\frac{n}{N-\beta})^{\frac{N-1}{N}}\rvert^{\frac{p}{N-1}(1-\frac{1}{k})(1-\frac{\beta}{N})})}\int_{0}^{e^{-\frac{n}{N-\beta}}}r^{N-1-\beta}\;\mathrm{d}r\notag\\
    =&\frac{N\kappa_{N}\Phi_{N,q,\beta}(\frac{\lambda}{\lambda_{N}}n)}{(1+d[\lambda(1-\frac{\beta}{N})]^{\frac{p}{N}(1-\frac{1}{k})(1-\frac{\beta}{N})}\lvert (\frac{1}{N\kappa_{N}})^{\frac{1}{N}}(\frac{n}{N-\beta})^{\frac{N-1}{N}}\rvert^{\frac{p}{N-1}(1-\frac{1}{k})(1-\frac{\beta}{N})})}\frac{e^{-n}}{N-\beta}\notag\\
    \sim&\frac{e^{(\frac{\lambda}{\lambda_{N}}-1)n}}{n^{\frac{p}{N}(1-\frac{1}{k})(1-\frac{\beta}{N})}}.\notag
\end{align}
Let us pick the subsequence $\{u_{n_{\lambda}}\}$ of $\{u_{n}\}$ which satisfies
$$
\frac{1}{n_{\lambda}}\sim 1-\frac{\lambda}{\lambda_{N}}\;\;\;\mathrm{for}\;\lambda\lesssim\lambda_{N}.
$$
After that,
\begin{align}
    A_{\lambda}&\geq\frac{1}{\|u_{n_{\lambda}}\|_{q}^{q(1-\frac{\beta}{N})}}\int_{\R^{N}}\frac{\Phi_{N,q,\beta}(\lambda(1-\frac{\beta}{N})\lvert u_{n_{\lambda}}\rvert^{\frac{N}{N-1}})}{(1+d[\lambda(1-\frac{\beta}{N})]^{\frac{p}{N}(1-\frac{1}{k})(1-\frac{\beta}{N})}\lvert u_{n_{\lambda}}\rvert^{\frac{p}{N-1}(1-\frac{1}{k})(1-\frac{\beta}{N})})F^{0}(x)^{\beta}}\;\mathrm{d}x\notag\\
    &\gtrsim\frac{n_{\lambda}^{\frac{q}{N}(1-\frac{\beta}{N})}e^{(\frac{\lambda}{\lambda_{N}}-1)n_{\lambda}}}{n_{\lambda}^{\frac{p}{N}(1-\frac{1}{k})(1-\frac{\beta}{N})}}\notag\\
    &\sim\left(\frac{1}{1-\frac{\lambda}{\lambda_{N}}}\right)^{(\frac{q}{N}-\frac{p}{N}(1-\frac{1}{k}))(1-\frac{\beta}{N})}.\notag
\end{align}
As a consequence,
\begin{align}\label{6.3}
    \liminf\limits_{\lambda\uparrow\lambda_{N}}A_{\lambda}\left(1-\frac{\lambda}{\lambda_{N}}\right)^{(\frac{q}{N}-\frac{p}{N}(1-\frac{1}{k}))(1-\frac{\beta}{N})}>0.
\end{align}
By \eqref{6.2} and \eqref{6.3},
$$
\limsup\limits_{\lambda\uparrow\lambda_{N}}\frac{(1-\frac{\lambda}{\lambda_{N}})^{\frac{q}{N}\frac{1}{k}(1-\frac{\beta}{N})}}{(1-\frac{\lambda}{\lambda_{N}})^{(\frac{q}{N}-\frac{p}{N}(1-\frac{1}{k}))(1-\frac{\beta}{N})}}<\infty,
$$
or equivalently
$$
p\geq q.
$$
Finally, we establish the sharpness of $\lambda_{N}$. Set
$$
v_{n}=c_{n}u_{n},
$$
where $c_{n}>0$ are chosen such that
$$
c_{n}^{a}+c_{n}^{N}\|u_{n}\|_{q}^{N}=1.
$$
Then
$$
\|F(\nabla v_{n})\|_{N}^{a}+\|v_{n}\|_{q}^{N}=1.
$$
Note that
$$
\mathrm{lim}_{n\rightarrow\infty}c_{n}=1.
$$
Hence, for $\lambda>\lambda_{N}$,
\begin{align}
    &\int_{\R^{N}}\frac{\Phi_{N,q,\beta}(\lambda(1-\frac{\beta}{N})v_{n}^{\frac{N}{N-1}})}{(1+d\lvert v_{n}\rvert^{\frac{p}{N-1}(1-\frac{1}{k})(1-\frac{\beta}{N})})F^{0}(x)^{\beta}}\;\mathrm{d}x\notag\\
    =&\int_{\R^{N}}\frac{\Phi_{N,q,\beta}(\lambda(1-\frac{\beta}{N})(c_{n}u_{n})^{\frac{N}{N-1}})}{(1+d\lvert c_{n}u_{n}\rvert^{\frac{p}{N-1}(1-\frac{1}{k})(1-\frac{\beta}{N})})F^{0}(x)^{\beta}}\;\mathrm{d}x\notag\\
    \geq&\frac{\Phi_{N,q,\beta}(\lambda(1-\frac{\beta}{N})c_{n}^{\frac{N}{N-1}}(\frac{1}{N\kappa_{N}})^{\frac{1}{N-1}}(\frac{n}{N-\beta}))}{1+dc_{n}^{\frac{p}{N-1}(1-\frac{1}{k})(1-\frac{\beta}{N})}(\frac{1}{N\kappa_{N}})^{\frac{p}{N(N-1)}(1-\frac{1}{k})(1-\frac{\beta}{N})}(\frac{n}{N-\beta})^{\frac{p}{N}(1-\frac{1}{k})(1-\frac{\beta}{N})}}\int_{0}^{e^{-\frac{n}{N-\beta}}}N\kappa_{N}r^{N-1-\beta}\;\mathrm{d}r\notag\\
    =&\frac{N\kappa_{N}\Phi_{N,q,\beta}(\frac{\lambda}{\lambda_{N}}nc_{n}^{\frac{N}{N-1}})}{1+dc_{n}^{\frac{p}{N-1}(1-\frac{1}{k})(1-\frac{\beta}{N})}(\frac{1}{N\kappa_{N}})^{\frac{p}{N(N-1)}(1-\frac{1}{k})(1-\frac{\beta}{N})}(\frac{n}{N-\beta})^{\frac{p}{N}(1-\frac{1}{k})(1-\frac{\beta}{N})}}\frac{e^{-n}}{N-\beta}\notag\\
    \sim&\frac{e^{(\frac{\lambda}{\lambda_{N}}c_{n}^{\frac{N}{N-1}}-1)n}}{n^{\frac{p}{N}(1-\frac{1}{k})(1-\frac{\beta}{N})}}\notag\\
    \sim&\frac{e^{(\frac{\lambda}{\lambda_{N}}-1)n}}{n^{\frac{p}{N}(1-\frac{1}{k})(1-\frac{\beta}{N})}}.\notag
\end{align}
Therefore,
\begin{align}
    \frac{1}{\|v_{n}\|_{q}^{q(1-\frac{1}{k})(1-\frac{\beta}{N})}}\int_{\R^{N}}\frac{\Phi_{N,q,\beta}(\lambda(1-\frac{\beta}{N})v_{n}^{\frac{N}{N-1}})}{(1+d\lvert v_{n}\rvert^{\frac{p}{N-1}(1-\frac{1}{k})(1-\frac{\beta}{N})})F^{0}(x)^{\beta}}\;\mathrm{d}x\rightarrow\infty.\notag
\end{align}
This finishes the proof of Theorem \ref{1.6}.
\end{proof}

\def\refname{References }

\end{document}